\documentclass[11pt]{article}
\usepackage{latexsym,amsfonts,amsthm,amsmath,amscd,amssymb,color,url}
\usepackage[dvips]{graphicx}

\newtheorem{lemma}{Lemma}[section]

\newtheorem{rem}[lemma]{Remark}
\newtheorem{prop}[lemma]{Proposition}

\newtheorem{defn}[lemma]{Definition}

\marginparwidth=3cm \reversemarginpar

\newcommand\matR{{\mathbb{R}}}

\newcommand\Dhat{{\widehat{D}}}

\newcommand\Omegatil{{\widetilde\Omega}}
\newcommand\alphatil{{\widetilde\alpha}}
\newcommand\betaztil{{\widetilde\beta}_0}

\newcommand\Ctil{{\widetilde C}}

\renewcommand{\hbar}{{\overline{h}}}

\newfont{\Got}{eufm10 scaled 1200}

\newcommand{\mycap} [1] {\caption{\footnotesize{#1}}}

\newcommand\calP{{\mathcal P}}

\begin{document}

\title{Algorithmic simplification of knot diagrams:\\
new moves and experiments}

\author{Carlo~\textsc{Petronio\thanks{Partially supported by the Italian FIRB project ``Geometry and topology of low-dimensional manifolds'' RBFR10GHHH.}}\and Adolfo~\textsc{Zanellati}}

\maketitle

\begin{abstract}
\noindent
This note has an experimental nature and contains no new theorems.

We introduce certain moves for classical knot diagrams
that for all the very many examples we have tested them on give
a \emph{monotonic complete simplification}.
A \emph{complete simplification} of a knot diagram $D$ is a sequence of moves that
transform $D$ into a diagram $D'$ with the minimal possible number of crossings for the isotopy class
of the knot represented by $D$.
The simplification is \emph{monotonic} if the number of crossings never increases along the sequence.
Our moves are certain $Z_{1,2,3}$
generalizing the classical Reidemeister moves $R_{1,2,3}$, and
another one $C$ (together with a variant $\Ctil$) aimed at detecting whether
a knot diagram can be viewed as a connected sum of two easier ones.

We present an accurate description of the moves and several results of
our implementation of the simplification procedure based on them, publicly available on the web.

\smallskip

\noindent MSC (2010):  57M25.

\smallskip

\noindent \textsc{Keywords:} Knot diagram, move, simplification.
\end{abstract}

\noindent
This paper describes constructions and experimental results originally due to the second named author, that
were formalized, put into context, and generalized in collaboration with the first named author.
No new theorem is proved.

We introduce new combinatorial moves $Z_{1,2,3}$ on knot diagrams,
together with another move $C$ and a variant $\Ctil$ of $C$,
that perform very well in the task of simplifying diagrams.
Namely, as we have checked by implementing the moves in~\cite{zanweb} and applying them
to a wealth of examples, the moves are apparently very efficient in carrying out
a \emph{monotonic complete simplification}.
Here by \emph{complete simplification} of a knot diagram $D$ we mean a sequence of moves that
transform $D$ into a diagram $D'$ with the minimal possible number of crossings for the isotopy class
of the knot represented by $D$.
The simplification is \emph{monotonic} if the number of crossings never increases along the sequence.

The moves $Z_{1,2,3}$ extend the classical
Reidemeister moves $R_{1,2,3}$, while
$C$ and $\Ctil$ are aimed at detecting whether a diagram can be viewed as the connected
sum of two easier ones. In practice, the simplification procedure based on the
moves $Z_{1,2,3}$ only is already quite powerful, since it allows for instance
to monotonically untangle most of the known hard diagrams of the unknot.
However the moves $Z_{1,2,3}$ only are not sufficient in general, as an example
provided to us by the referee shows, but this example is easily dealt with using
the move $C$. We do not know whether the use of all the moves $Z_{1,2,3}$, $C$ and $\Ctil$
allows a monotonic complete simplification of any knot diagram, but if this were
the case one would have an algorihtm to compute the crossing number of a knot,
and in particular to detect knottedness.

\medskip

In the history of knot theory quite some energy has been devoted to
the problem of effectively detecting whether or not a knot diagram represents the unknot,
and more generally of computing the crossing number of a link starting from an arbitrary
diagram representing it.
The point here is that, using only Reidemeister moves,
one may have to temporarily increase the number
of crossings before reducing the link to a minimal crossing diagram.
This phenomenon occurs both for the unknot and for more general knots and links,
as explained below. Many solutions of the
unknotting problem, with various degrees of suitability for actual implementation,
have been obtained over time, see Birman and Hirsch~\cite{bihi},
Galatolo~\cite{galatolo}, Hass-Lagarias~\cite{HaLa},
Dynnikov~\cite{dyn} (building on Birman and Menasco~\cite{bime1, bime2} and
Cromwell~\cite{crom}), and Lackenby~\cite{lack3}.
Dynnikov's technique also leads to the solution of other
computational problems in link theory (see also Kazantsev~\cite{kaz} for a further development), but apparently not
a method for calculating the
crossing number. The result of Lackenby provides an upper bound on the number
of Reidemeister moves needed to untangle a diagram of the unknot, so it is
a fundamental one from a theoretical viewpoint, but it is of impractical use.
See also \cite{ChuMi, Fol, EnKa} for some contributions more
focused on algorithmic efficiency.

Concerning the crossing number,
a vast and deep literature exists on it,
see for instance \cite{Ohy, king, Gru, ChuLi, Hoste, HoSha, Sto} and the recent
fundamental results of Lackenby~\cite{lack1, lack2},
but no effective algorithm to compute it is available to our knowledge.
We note that the achievement of Coward and Lackenby~\cite{CowLac} yields a theoretical algorithmic procedure to decide
whether two diagrams represent the same link, so in principle it allows one to compute
the crossing number in an indirect way, namely by comparing a diagram
to \emph{all} those having fewer crossings. The moves introduced in this paper are not
proved to actually yield an algorithm to compute the crossing number in general,
but in practice they seem to work very effectively.
We mention here that the simplification algorithm based
on our moves $Z_{1,2,3}$, $C$ and $\Ctil$ applies not
only to knots but also to multi-component links, but
our implementation~\cite{zanweb} is currently restricted to knots, so we have
no evidence of efficiency for links so far.

\bigskip

The structure of the paper is as follows. In Section~\ref{Z123:section} we introduce the three moves
$Z_{1,2,3}$ that extend the classical Reidemeister moves (see, \emph{e.g.},~\cite{rolfsen}),
and we describe an algorithm based on these moves aimed at simplifying link diagrams.
Then in Section~\ref{experimental:section} we describe many experimental applications of our
algorithm to famously hard knot diagrams, displaying its remarkable practical performance.
Finally, in Section~\ref{Cmove:section} we introduce the further moves $C$ and $\Ctil$,
showing that they sometimes make the simplification task much faster,
and, more importantly, verifying that in some cases they allow simplifications that
the moves $Z_{1,2,3}$ only are not capable of realizing.

\bigskip\noindent\textsc{Acknowledgements}\quad
We thank Malik Obeidin for indicating to us the example shown in Fig.~\ref{malik:fig} and for
confirming the minimality of the 28-crossings diagram referred to at the end of Section~\ref{experimental:section}.
We also warmly thank the referee for having suggested to test our algorithm on the diagrams
of Figg.~\ref{fulltwist:fig},~\ref{138:fig} and~\ref{120:fig} (see also
Remark~\ref{old:version:rem}).

\section{Generalized Reidemeister moves and\\
the reduced simplification algorithm}\label{Z123:section}
In this section we introduce the moves $Z_{1,2,3}$
and we describe the diagram simplification algorithm based on them.

\paragraph{Notation for links and the crossing number}
We will define our moves in a formal abstract way, and we will also illustrate them pictorially.
To do this, we define a \emph{link} $L$ to be a tame embedding $\ell:\mathop{\sqcup}\limits_{i=1}^p S_i^1\to S^3$, where
each $S^1_i$ is a circle, $S^3=\matR^3\cup\{\infty\}$ and $\ell$ avoids $\infty$, with $\ell$ viewed up to isotopy in $S^3$.
We then define a \emph{link diagram} $D$ to be an immersion $p:\mathop{\sqcup}\limits_{i=1}^p S_i^1\to S^2$
with normal double points (crossings) as only singularities, where
$S^2=\matR^2\cup\{\infty\}$ and $p$ avoids $\infty$, together with the usual under-over
indication at each crossing (no specific notation is employed for this indication).
Here $D$ is viewed up to isotopy on $S^2$ and it determines a link $[D]$.

We denote by $c(D)$ the number of crossings of a diagram $D$, and by $c(L)$ the minimum
of $c(D)$ over the diagrams $D$ with $[D]=L$. This minimum is called the
\emph{crossing number} of $L$. If $[D]=L$ and $c(D)=c(L)$ we say that $D$ is \emph{minimal}.
The general aim of this paper is to describe combinatorial moves capable of completely
simplifying any given diagram $D$ in a monotonic fashion, namely transforming $D$ into
a minimal diagram $D'$ with $[D']=[D]$ without ever increasing $c$.

\paragraph{Graphic conventions}
A link diagram $D$ is drawn as usual, and in our figures of a move $\mu$ we always adopt the following
conventions:
\begin{itemize}
\item A thin solid line is part of $D$;
\item A slightly thicker line is also part of $D$, highlighted for the role it
plays in the definition of $\mu$;
\item A very thick line represents a portion of the plane where several strands of $D$ can
appear and cross each other; often these thick lines are drawn so as to merely suggest the possible
behaviors of $D$, the exact conditions to be met being described in the formal definition;
\item A dashed line is always transverse to $D$ and not part of $D$, with its ends (if any)
on $D$ but not at crossings;
\item A gray shading is used to highlight a planar region
playing a role in the definition of $\mu$.
\end{itemize}

If $\alpha\subset S^1_i$ is a closed arc we call \emph{extension} of $p(\alpha)$ a curve $p(\alphatil)$
where $\alphatil$ is an arc that contains $\alpha$ in its interior. A \emph{germ of extension} is one
of the two components of $p(\alphatil\setminus\alpha)$, viewed up to inclusion.

\paragraph{The move $Z_1$}
Let $\alpha\subset S_i^1$ be a closed arc such that $p|_\alpha$ is a simple closed
curve, and let $\Omega$ be the component of $S^2\setminus p(\alpha)$
that does not contain the germs of extensions of $p(\alpha)$.
Let $\beta_1,\ldots,\beta_N$ be the components (each an arc or a circle) of $p^{-1}(\Omega)$,
and suppose we can assign them labels $\lambda_1,\ldots,\lambda_N$ in $\{U,O\}$ so that:
\begin{itemize}
\item If $\beta_j$ is an arc then $p(\beta_j)$ is over $p(\alpha)$ at both ends if $\lambda_j=O$,
and under $p(\alpha)$ if $\lambda_j=U$;
\item If $\lambda_j=U$ and $\lambda_k=O$ then $p(\beta_j)$ is under $p(\beta_k)$ where they cross.
\end{itemize}
We then call $Z_1$ the move that consists of collapsing $\alpha$ to a point (see Fig.~\ref{Z1:fig}).
\begin{figure}
    \begin{center}
    \includegraphics[scale=.6]{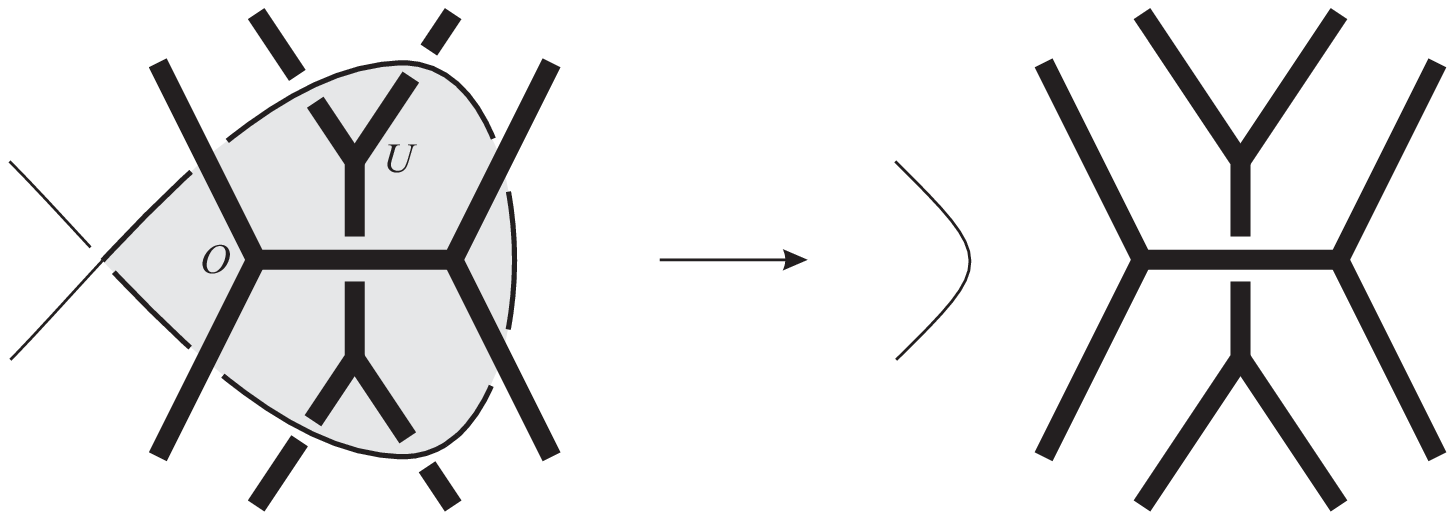}
    \end{center}
\mycap{The move $Z_1$. The thick arc on $D$ is $p(\alpha)$, and the gray region is $\Omega$. \label{Z1:fig}}
\end{figure}
Of course $Z_1$ preserves the link type, and the Reidemeister move $R_1$ is a special case of $Z_1$.

Figure~\ref{Z1example:fig}-left illustrates the \emph{wrong} choice of $\Omega$ as a component of $S^2\setminus p(\alpha)$.
An application of $Z_1$ is given in Fig.~\ref{Z1example:fig}-right.
\begin{figure}
    \begin{center}
    \includegraphics[scale=.6]{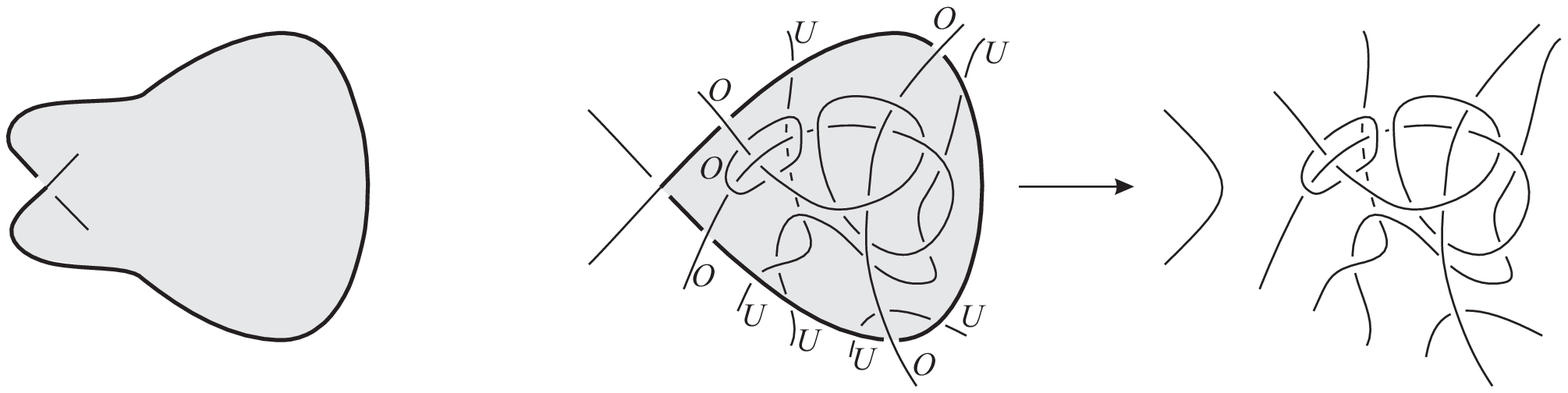}
    \end{center}
\mycap{Left: the wrong choice of $\Omega$ for $Z_1$. Right: an instance of $Z_1$.\label{Z1example:fig}}
\end{figure}

\paragraph{The move $Z_2$}
For $\lambda=U,O$ let $\alpha_\lambda\subset S^1_{i_\lambda}$ be a closed arc such that
$p|_{\alpha_\lambda}$ is a simple curve. Suppose that $p(\alpha_U)$ and $p(\alpha_O)$ have the
same ends at two crossings of $D$, and that $p(\alpha_U)$ is under $p(\alpha_O)$ at both.  Set $\alpha=\alpha_U\cup\alpha_O$,
let $\Omega$ be one of the components of $S^2\setminus p(\alpha)$, and assume that $\Omega$
does not contain any of the four germs of extensions of $p(\alpha)$.
Let $\beta_1,\ldots,\beta_N$ be the components (each a arc or a circle) of $p^{-1}(\Omega)$,
and suppose it is possible to assign
them labels $\lambda_1,\ldots,\lambda_N$ in $\{U,O\}$ so that
precisely the same conditions as in the definition of the move $Z_1$ are met.
Under these assumptions we call $Z_2$ the move that consists of interchanging $p|_{\alpha_U}$ and $p|_{\alpha_O}$
and pulling apart their ends (see Fig.~\ref{Z2:fig}).
\begin{figure}
    \begin{center}
    \includegraphics[scale=.6]{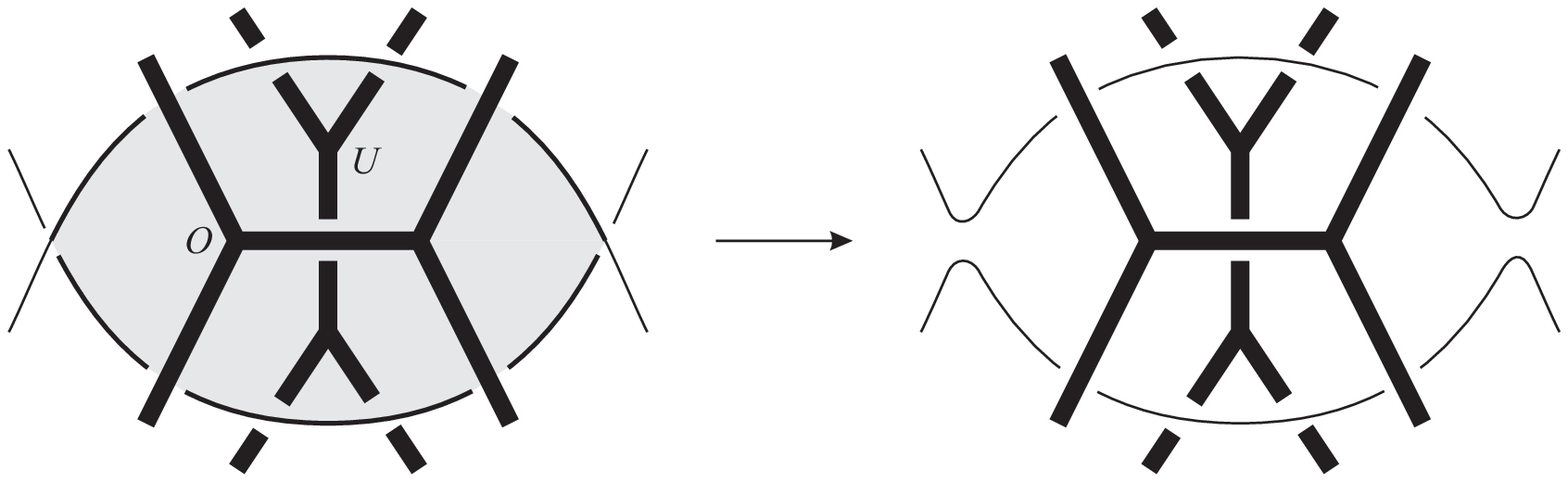}
    \end{center}
\mycap{The move $Z_2$. The thick arcs on $D$ are $p(\alpha_{U,O})$, and the gray region is $\Omega$. \label{Z2:fig}}
\end{figure}
Of course $Z_2$ preserves the link type, and the Reidemeister move $R_2$ is a special case of $Z_2$.

\begin{rem}\label{label:choices:rem1}
\emph{In the description of both the moves $Z_1$ and $Z_2$,
the labels for the $\beta_j$'s that are arcs are fixed by the first
condition, while both labels $U$ and $O$ should be tested for each
$\beta_j$ being a circle, verifying whether some choice
allows the second condition to be met.}
\end{rem}

\paragraph{The move $Z_3$}
Let $\alpha\subset S^1_i$ be a closed arc such that $p|_\alpha$
is a simple curve with its ends not being crossings of $D$.
Suppose that $p(\alpha)$ is over (respectively, under) at all the crossings it contains.
Let $\gamma$ be a simple arc in $S^2$ with the same ends as $p(\alpha)$
but otherwise disjoint from it, and transverse to $D$ (including at its ends).
Then the move $Z_3$ consists of replacing $p|_\alpha$ by $\gamma$ and stipulating that
$\gamma$ is over (respectively, under) at all the crossings it contains.
Figure~\ref{Z3:fig} shows this move for the case of an overarc and
\begin{figure}
    \begin{center}
    \includegraphics[scale=.7]{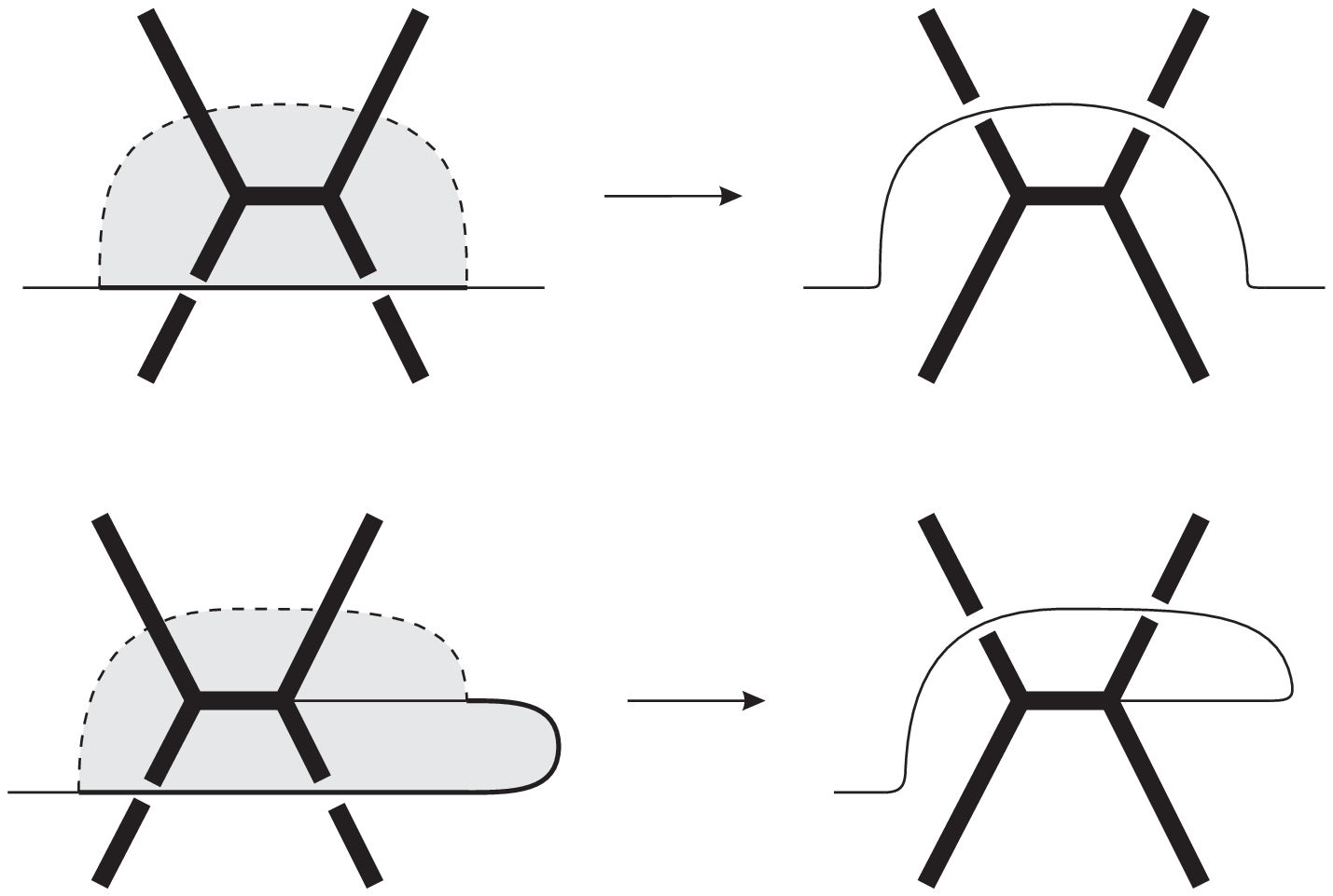}
    \end{center}
\mycap{Move $Z_3$ for an overarc, in its two possible versions.
The thick arc is $p(\alpha)$, and the gray region $\Omega$ is actually not
necessary to define the move (see the text).\label{Z3:fig}}
\end{figure}
displays a small subtlety: two slightly
different situations can arise, namely the two ends of $\gamma$ can lie on the
same side of $p(\alpha)$, as in Fig.~\ref{Z3:fig}-top, or on opposite sides,
as in Fig.~\ref{Z3:fig}-bottom. In other words, if $\Omega$ is
one of the two components of $S^2\setminus (p(\alpha)\cup\gamma)$, we see
that $\Omega$ can contain an even (0 or 2) or odd (1) number of germs
of extensions of $p(\alpha)$ ---the two even cases are the same up to the choice of $\Omega$.

We will now introduce some restrictions on $Z_3$. To this we note that for a split
link $L_1\sqcup L_2$ we have $c(L_1\sqcup L_2)=c(L_1)+c(L_2)$, so
we can rule out this case and always assume that the regions of $S^2\setminus D$ are
topological discs. We then denote by $\Dhat$ the dual planar graph, and
we stipulate that the move $Z_3$ can be applied only
if both the following hold:
\begin{itemize}
\item (\emph{maximality}) $\alpha$ is a maximal overarc or underarc of $D$;
\item (\emph{minimality}) $\gamma$ defines a minimal path in
the graph $\Dhat$ dual to $D$.
\end{itemize}
Note that the minimality condition is met even if $\gamma$ does cross $D$ at all; in this case,
the induced path in $\Dhat$ is a point.

\begin{rem}
\emph{As shown in Fig.~\ref{R3isZ3:fig},
\begin{figure}
    \begin{center}
    \includegraphics[scale=.6]{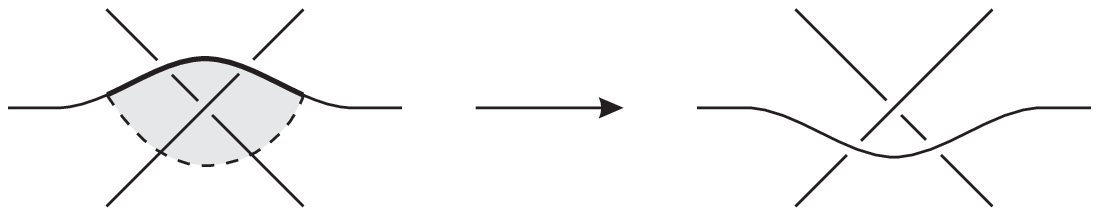}
    \end{center}
\mycap{An $R_3$ move is a $Z_3$ move.\label{R3isZ3:fig}}
\end{figure}
a Reidemeister move  $R_3$ can always be realized as a $Z_3$ (but perhaps not satisfying
maximality and minimality).}
\end{rem}

\paragraph{The reduced simplification algorithm}
A move $D\to D'$ on link diagrams is called \emph{decreasing} if $c(D')<c(D)$,
\emph{horizontal} if $c(D')=c(D)$, and \emph{non-increasing} if $c(D')\leqslant c(D)$.
A sequence $D=D_0\to D_1\to \ldots \to D_k\to D'$ of moves is called \emph{monotonic} if
each move $D_{i-1}\to D_i$ is non-increasing, and \emph{strictly monotonic} if each
$D_{i-1}\to D_i$ is decreasing.

We now describe the reduced version of the algorithm implemented in~\cite{zanweb}
(see Section~\ref{Cmove:section} for the complete version):

\bigskip

\noindent \textbf{Procedure} $\calP$\qquad If there is
a sequence $D=D_0\to D_1\to \ldots \to D_k\to D'$
with each $D_{i-1}\to D_i$ a horizontal $Z_3$ move and
$D_k\to D'$ a decreasing $Z_{1,2,3}$ move, then apply the procedure to $D'$. If not, output $D$.

\bigskip

Two remarks about this procedure are in order:
\begin{itemize}
\item While applying procedure $\calP$ to any given $D$ one can restrict to
sequences of horizontal $Z_3$ moves $D=D_0\to D_1\to \ldots \to D_k$
such that the $D_i$'s are pairwise distinct, so a finite number of such sequences
must be inspected;
\item After one sequence $D=D_0\to D_1\to \ldots \to D_k\to D'$ of moves
as in the definition of $\calP$ is found, the moves are immediately
performed and $\calP$ proceeds with $D'$, so $D$ is never considered again.
\end{itemize}

We say that $\calP$ succeeds on a link diagram $D$ if it allows a complete simplification
of $D$, namely, if it turns $D$ into a diagram realizing the crossing number of $[D]$.
Note that by definition this simplification is always monotonic, and it is strictly monotonic
if no horizontal $Z_3$ move is employed.

\begin{rem}\label{Z3:restrictions:rem}
\emph{By dropping the minimality and maximality restrictions on $Z_3$,
one could have a potentially more general simplification procedure, but
in practice this does not lead to any advantage. Similarly, one could inspect
all the possible initial sequences $D=D_0\to D_1\to \ldots \to D_k\to D'$,
and recursively proceed for all the diagrams $D'$ thus obtained, but again this
has no practical effect.}
\end{rem}

\begin{rem}\label{old:version:rem}
\emph{In practice procedure $\calP$ has proved extremely efficient in bringing
to their minimal status many complicated knot diagrams, see Section~\ref{experimental:section}.
In a previous version of this paper we were putting forward the conjecture that $\calP$ would
successfully do this for \emph{every} diagram, but now \emph{we know this conjecture is false},
because the referee provided us with a 120-crossing diagram of the unknot that does not
untangle via non-increasing $Z_{1,2,3}$ moves, shown in Fig.~\ref{120:fig} below.
In fact, when we wrote the first version
of the paper, we already had in our algorithm a further move $C$, described in
Section~\ref{Cmove:section} below. We knew examples where $C$ allowed a quicker monotonic
simplification than the $Z_{1,2,3}$ only, but we decided not to include $C$ in the description
of the algorithm because we had no example in which $C$ was essential.
The 120-crossing diagram is precisely such an example, because after one move $C$
it untangles very quickly via $Z_{1,2,3}$. The enhanced procedure that uses also $C$ will be
described in Section~\ref{Cmove:section}.}
\end{rem}

\section{Experimental evidence}\label{experimental:section}
In this section we explain the practical performance of the reduced procedure
$\calP$ on many knot diagrams that are known to be hard, and particularly
on many diagrams of the unknot that do not monotonically untangle via
Reidemeister moves. We will provide full details for some examples,
while we will confine ourselves to the essential information for other cases,
referring the reader to~\cite{zanweb}. This website contains a publicly accessible
implementation of our algorithm, with documentation and instructions.
In particular, our software allows the user to input diagrams and simplify them.

We emphasize here that we know that $\calP$ does not always succeed (see Remark~\ref{old:version:rem}
above) and that~\cite{zanweb} already implements the complete procedure described below in
Section~\ref{Cmove:section}.

\paragraph{Technical notes}
We have implemented~\cite{zanweb} procedure $\calP$ using
Visual Basic 6, at 32 bits, under Windows XP on a 2010 laptop, and most of the time it takes is
actually absorbed by the graphic handling of the diagrams ---the ``quick simplification''
procedure typically runs much faster. For practical reasons
our implementation is currently
limited to the case of knots, but the extension to the case of
multi-component links, possibly even split ones, presents no theoretical
difficulties and will be carried out in the future.

\bigskip

We will begin by showing in full detail how the famous
Culprit, Goeritz and Thistlethwaite diagrams of the unknot
(see for instance~\cite{HaLa, HeKa, KaLa}) untangle in a strictly monotonic
and very quick way using our $Z_{1,2,3}$ moves.
Then we will treat the Hass-Nowik knots~\cite{HaNo}, showing that the moves
$Z_{1,2,3}$ untangle them in linear time (while the Reidemeister moves do so only quadratically).
Next, we will describe the performance of our algorithm on many other diagrams
of the unknot, mentioning in particular cases where a monotonic untangling via $Z_{1,2,3}$ moves
succeeds but a strictly monotonic untangling impossible, and even cases
where more than one consecutive horizontal $Z_3$ move is unavoidable.
Finally, we will turn to non-trivial (and even composite) knots, showing
that our reduced algorithm is also successful (and extremely efficient) in bringing their diagrams
to a minimal crossing number status. We single out here the Kazantsev knot
diagram~\cite{kaz}, with 23 crossings, which is intractable using only the Reidemeister moves,
while our procedure $\calP$ monotonically reduces it to its minimal
status with 17 crossings via $6$ moves of type $Z_3$ (one of which is horizontal).
To conclude we will provide two examples of diagrams of non-trivial knots
that require more than one consecutive horizontal $Z_3$ move to reach a minimal status.

\paragraph{The Culprit knot}
In Fig.~\ref{culprit:fig} we show how procedure $\calP$ applies to the Culprit knot.
\begin{figure}
    \begin{center}
    \includegraphics[scale=.6]{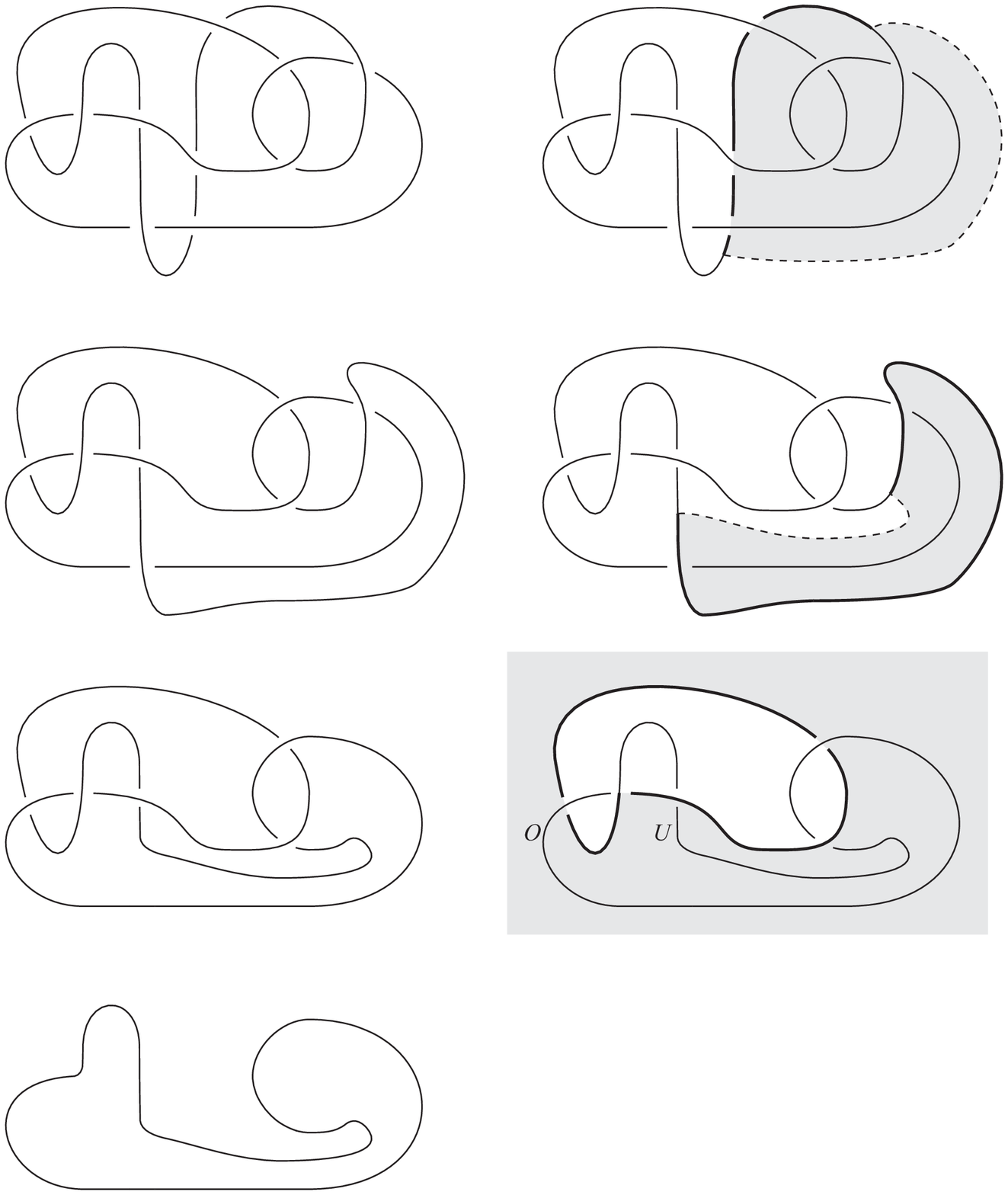}
    \end{center}
\mycap{Monotonic untangling of the Culprit knot via two $Z_3$ and one $Z_1$ moves.\label{culprit:fig}}
\end{figure}
In the top-left corner of the picture we show the original Culprit diagram, then on its right the
identification of a $Z_3$ move that applies to it, and below it the application of this move.
The rest of the picture is similarly organized, with one more $Z_3$ move and one $Z_1$
move that completely untangle the diagram. We make the following remarks:
\begin{itemize}
\item No horizontal $Z_3$ move is needed in this case;
\item The second $Z_3$ move applied could equivalently be described as a $Z_2$ (and, actually, an $R_2$) move;
moreover, two other $Z_2$ moves could be applied instead of it, see
Fig.~\ref{culpritbis:fig}, and both would also quickly lead to the untangling; in all the subsequent examples
we will refrain from showing alternative simplification moves, even when many exist;
\begin{figure}
    \begin{center}
    \includegraphics[scale=.6]{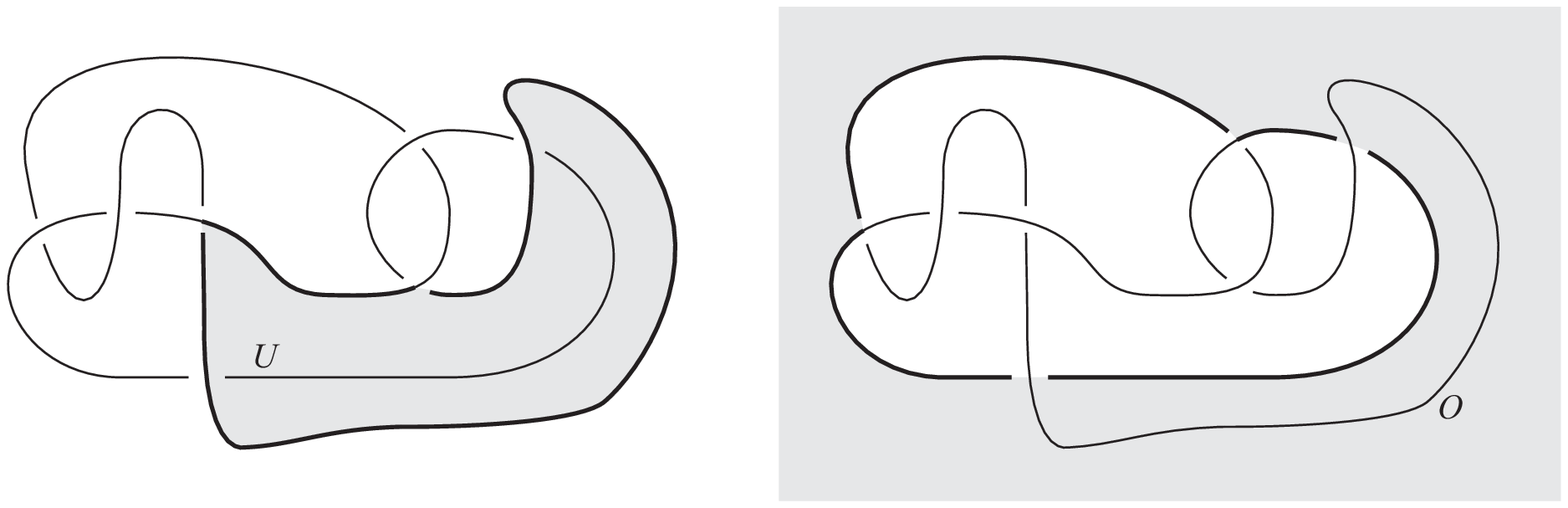}
    \end{center}
\mycap{Alternative $Z_2$ moves one could apply to the Culprit knot after the first $Z_3$.\label{culpritbis:fig}}
\end{figure}
\item The region $\Omega$ to which the final $Z_1$ move is applied is unbounded.
\end{itemize}

\paragraph{The Goeritz knot}
The Goeritz knot of Fig.~\ref{goeritz:fig}-top/left is untangled as shown in the rest of the figure,
via four $Z_3$ moves (none of which horizontal) and one $Z_1$ (applied to an unbounded $\Omega$).
Here and elsewhere we consider a knot diagram to be untangled if it has fewer than three crossings, because any
diagram of a non-trivial knot has more than two.
\begin{figure}
    \begin{center}
    \includegraphics[scale=.6]{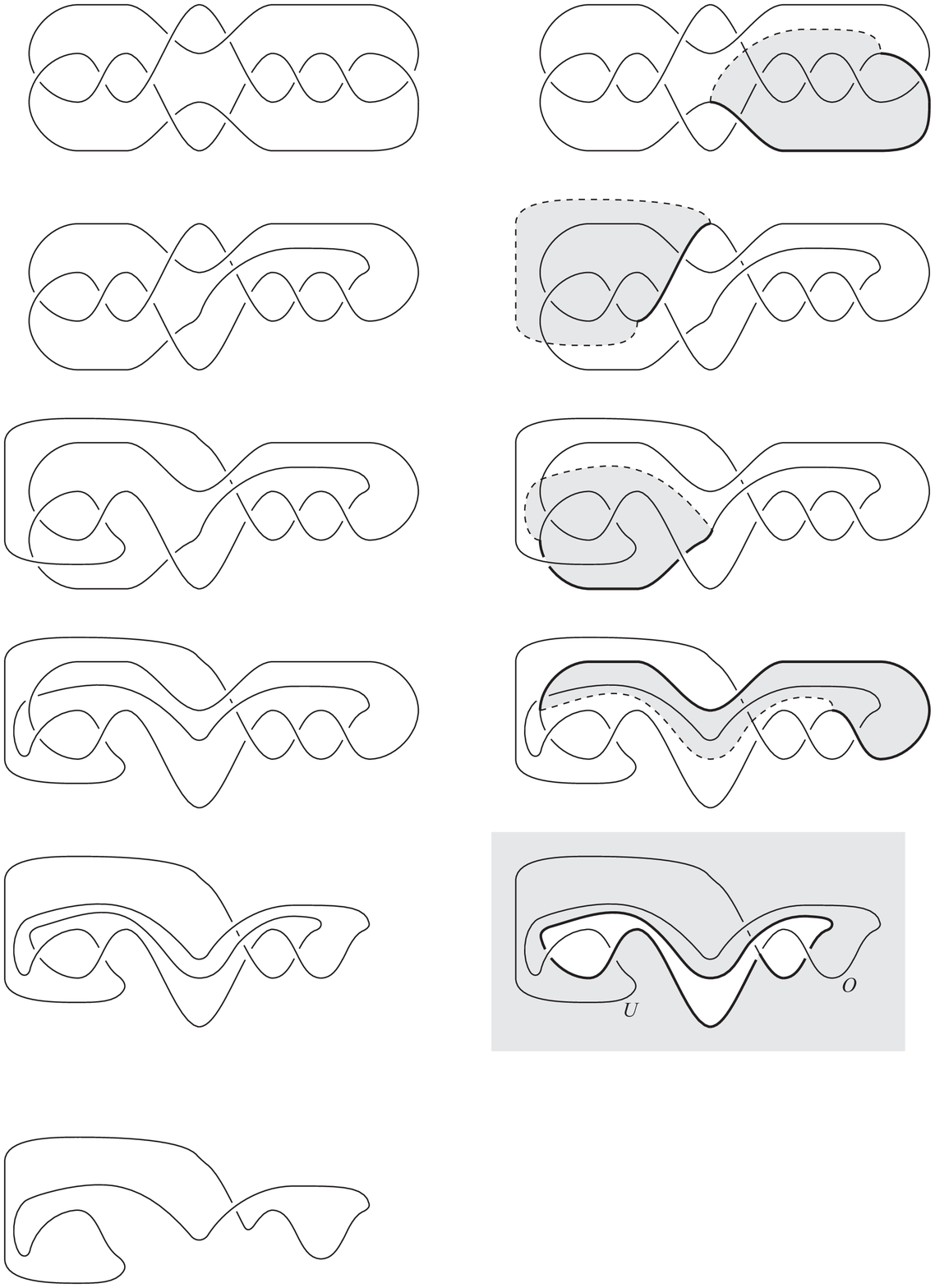}
    \end{center}
\mycap{Monotonic untangling of the Goeritz knot via four $Z_3$ and one $Z_1$ moves.\label{goeritz:fig}}
\end{figure}

\paragraph{The Thistlethwaite knot}
The Thistlethwaite knot of Fig.~\ref{thist:fig}-top/left
\begin{figure}
    \begin{center}
    \includegraphics[scale=.6]{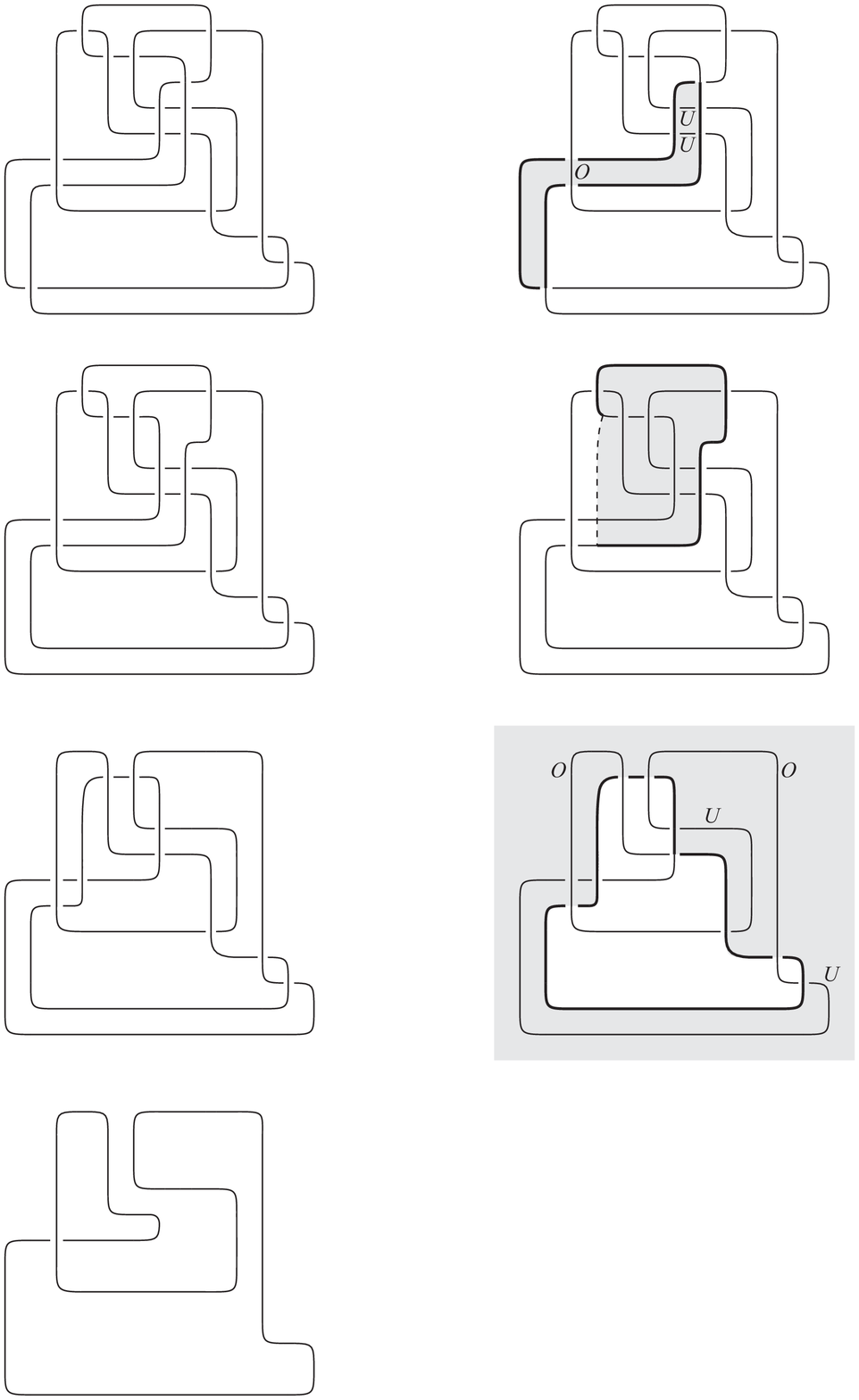}
    \end{center}
\mycap{Monotonic untangling of the Thistlethwaite knot via $Z_2$, $Z_3$ and $Z_1$.\label{thist:fig}}
\end{figure}
is untangled as shown in the rest of the figure,
via three $Z_{1,2,3}$ moves. Again all the moves strictly decrease the number of vertices, and
$Z_1$ is applied to an unbounded $\Omega$.

\paragraph{The Hass-Nowik knots}
The diagram of the unknot shown in Fig.~\ref{hassnowik:fig}-top has $7n-1$ crossings
\begin{figure}
    \begin{center}
    \includegraphics[scale=.6]{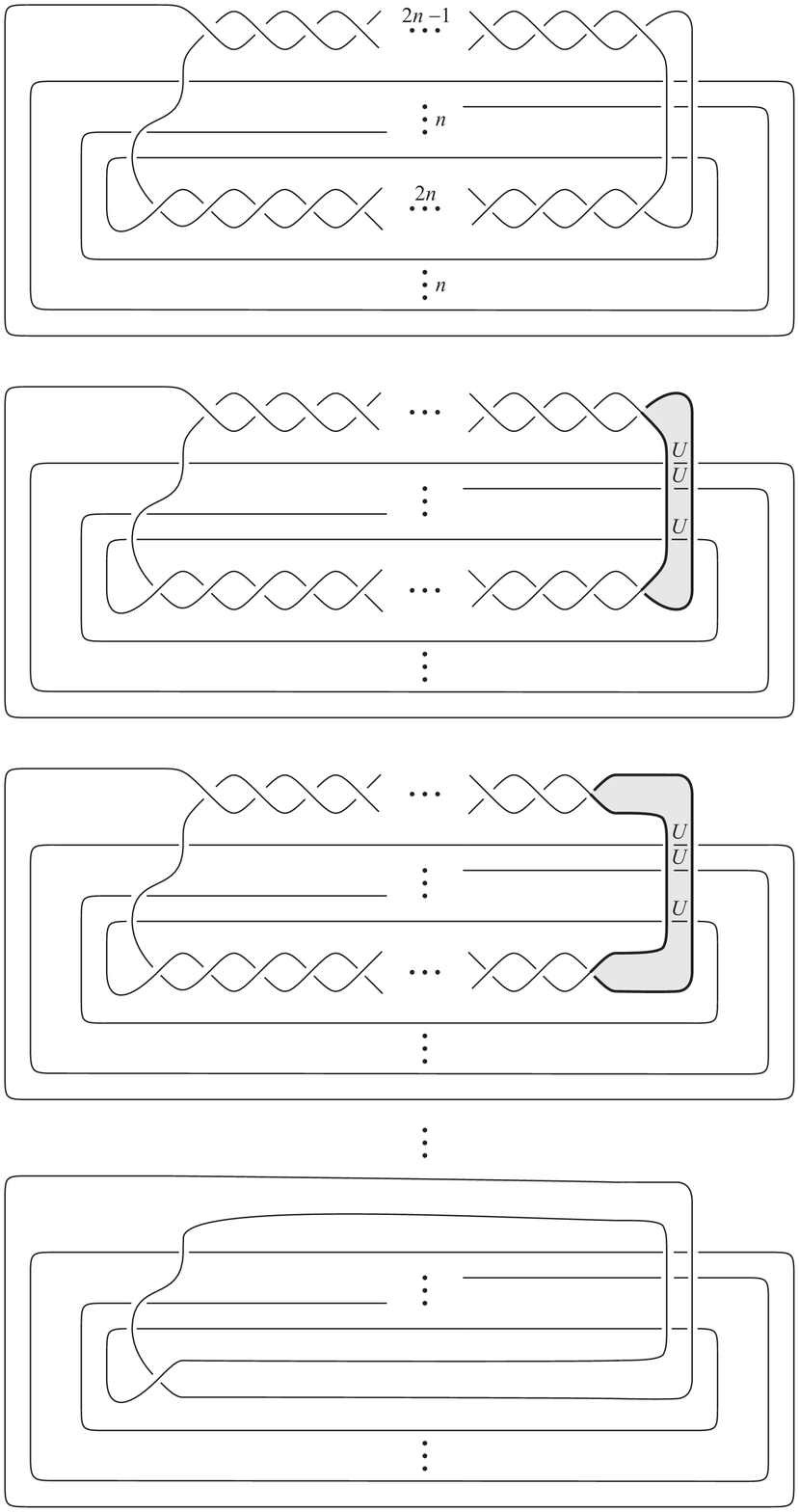}
    \end{center}
\mycap{A diagram of the unknot that untangles quadratically using the Reidemeister moves and linearly
using the moves $Z_{1,2,3}$: the initial sequence of $Z_2$ moves.\label{hassnowik:fig}}
\end{figure}
and was shown in~\cite{HaNo} to require at least $2n^2+3n-2$ Reidemeister moves to
reduce to the trivial diagram. In the rest of Fig.~\ref{hassnowik:fig} we show how to apply
$2n-1$ moves of type $Z_2$, thus reducing to $3n+1$ crossings, and then in Fig.~\ref{hassnowikbis:fig}
\begin{figure}
    \begin{center}
    \includegraphics[scale=.6]{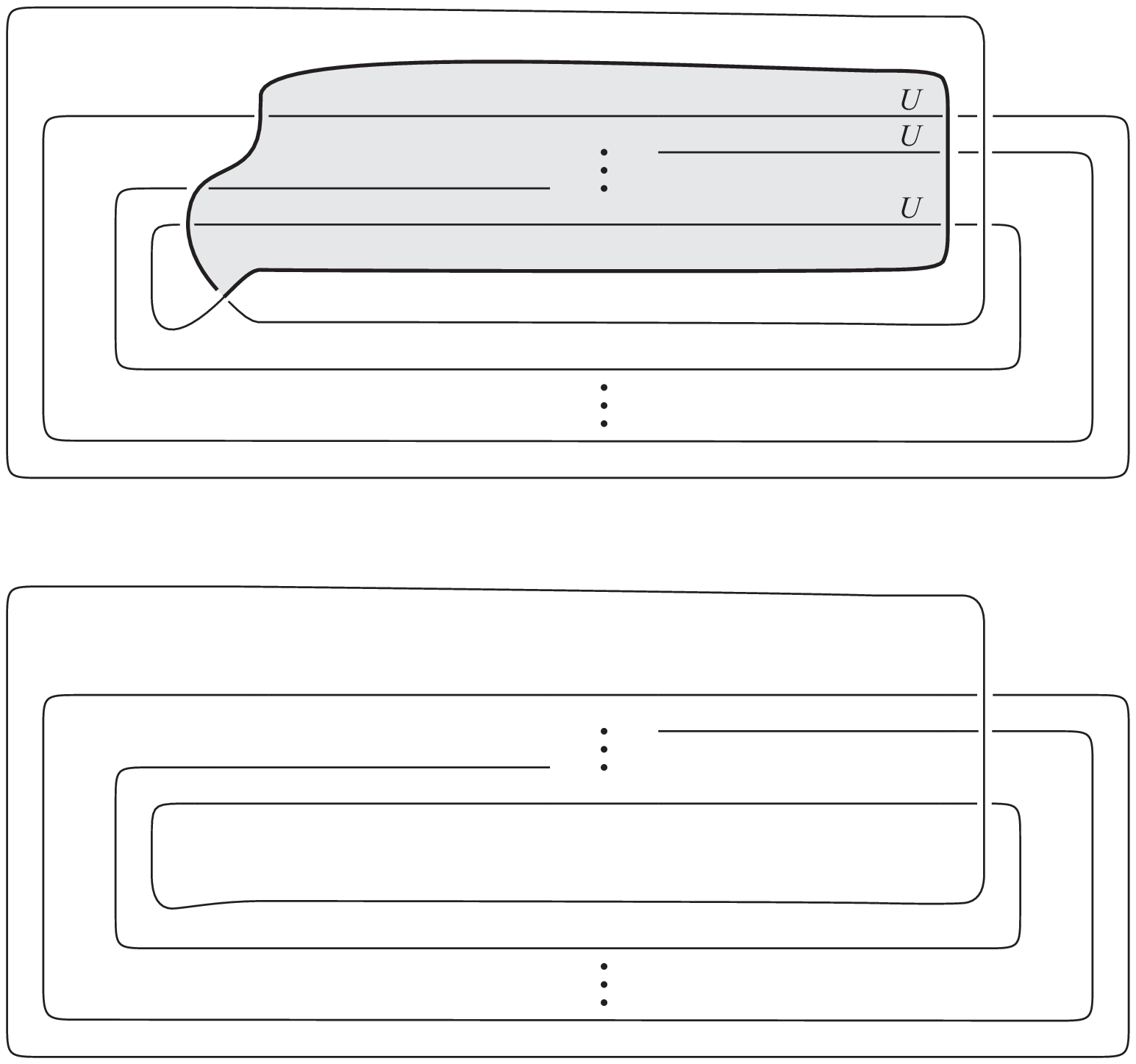}
    \end{center}
\mycap{A $Z_1$ move on the last diagram of Fig.~\ref{hassnowik:fig}, after which a sequence of $R_1$ moves
gives the trivial diagram.\label{hassnowikbis:fig}}
\end{figure}
we show a move $Z_1$ that removes $2n+1$ crossings, after which an obvious sequence of $n$ moves $R_1$ untangles the
diagram. This shows that while $2n^2+3n-2$ Reidemeister moves are necessary, $3n$ (strictly decreasing)
moves $Z_{1,2,3}$ suffice.

\paragraph{Necessity of horizontal moves}
In Figure.~\ref{K31:fig}-top/left we show a certain diagram $K_{31}$ of the unknot,
also considered by Dynnikov~\cite{dynbis}.
\begin{figure}
    \begin{center}
    \includegraphics[scale=.6]{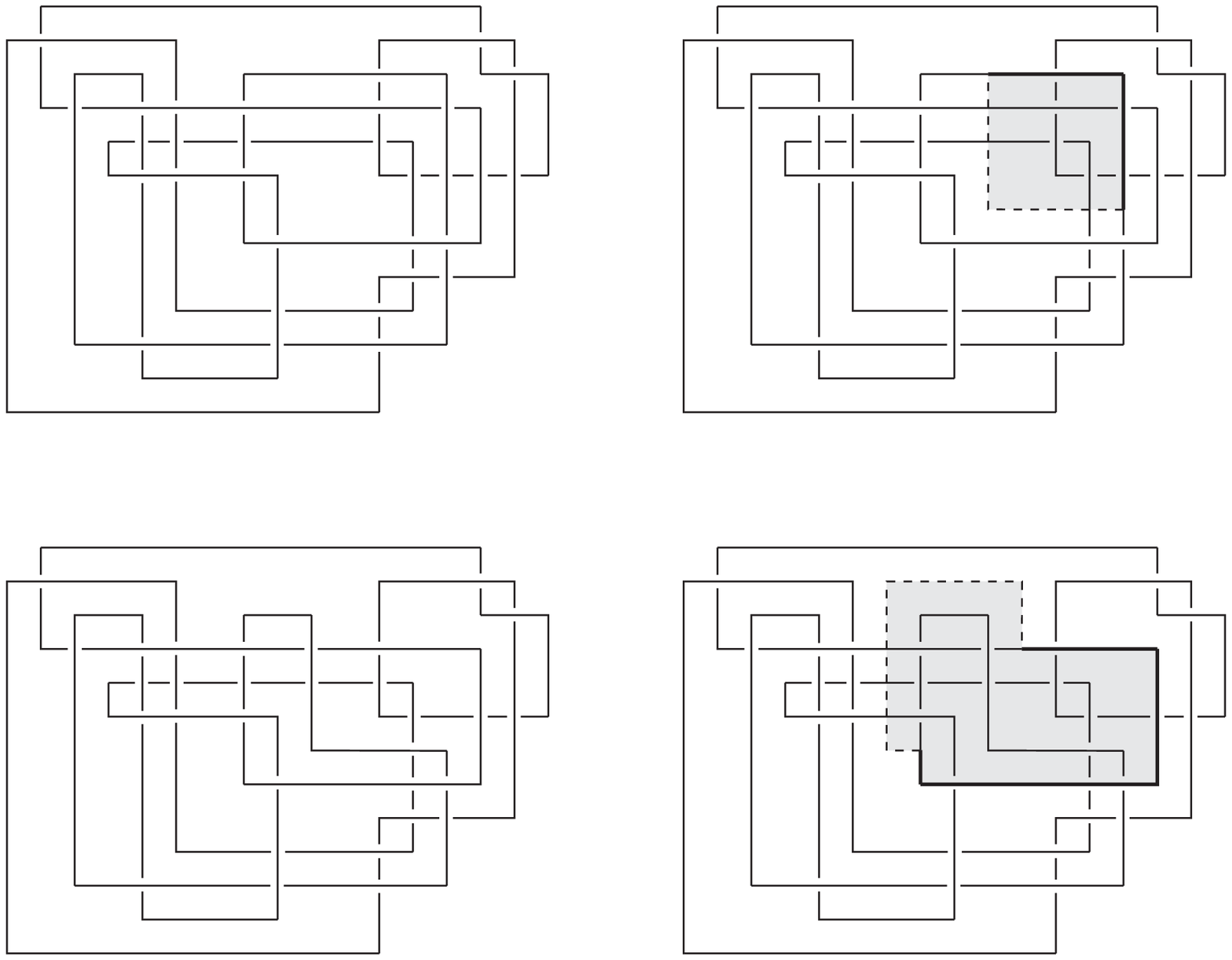}
    \end{center}
\mycap{A diagram of the unknot that untangles in a monotonic but not in a strictly monotonic way (first two of a sequence
of $10$ untangling $Z_{1,2,3}$ moves).\label{K31:fig}}
\end{figure}
Direct inspection shows (and our software~\cite{zanweb} confirms) that no decreasing $Z_{1,2,3}$ move applies to this diagram.
However, as shown in the same picture, after a horizontal $Z_3$ move a decreasing move is possible.
As illustrated in~\cite{zanweb}, a sequence of $8$ further $Z_{1,2,3}$ moves (all decreasing)
leads to a complete untanglement of the diagram.

\paragraph{More hard diagrams of the unknot}
The reader will find in~\cite{zanweb} full details on the following examples, all of which
refer to diagrams of the unknot that do not monotonically untangle via Reidemeister moves:
\begin{itemize}

\item A diagram apparently due to Kauffman, with $9$ crossings, monotonically untangled by $\calP$ via $3$ moves;

\item The ``monster'' diagram (a name apparently also
due to Kauffman), with 10 crossings, that untangles via $3$ moves;

\item A certain $K_{12}$ diagram, with 12 crossings, that untangles via $4$ moves;

\item The two Ochiai diagrams, with $13$ and $16$ crossings respectively, that untangle
via 5 and 6 moves;

\item A diagram with $32$ crossings, apparently due to Freedman, that untangles via $8$ moves;

\item A satellite diagram of the unknot with 64 crossings, apparently due to Hass,
monotonically untangled by procedure $\calP$ via $31$ moves;

\item The \emph{Haken Gordian knot}, with $141$ crossings,
untangled via $53$ moves;

\item The \emph{Haken satellite knot}, with $188$ crossings,
untangled via $68$ moves.

\end{itemize}

\paragraph{A big unknot}
We show in Fig.~\ref{fulltwist:fig} a diagram of the unknot provided to us by the referee
(in fact a variation of the diagram of Fig.~\ref{120:fig} below).
\begin{figure}
    \begin{center}
    \includegraphics[scale=.6]{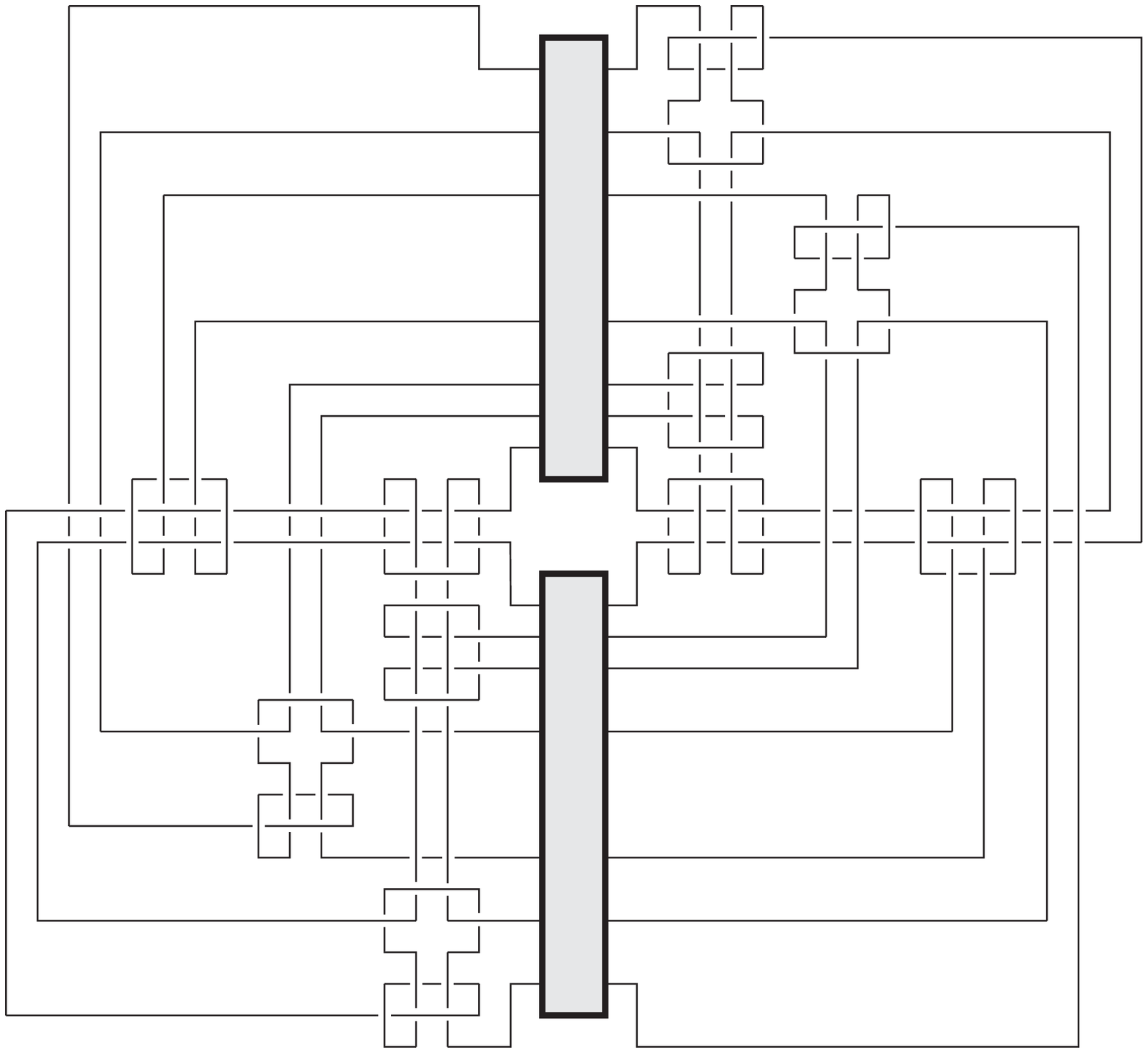}
    \end{center}
\mycap{By inserting a positive full twist of the seven strands entering the shadowed
lower box in the picture, and a negative full twist of the seven strands entering the shadowed
upper box in the picture, we get a diagram of the unknot with 204 crossings.\label{fulltwist:fig}}
\end{figure}
Our reduced procedure untangles it via a strictly monotonic sequence of $80$ moves of type $Z_{1,2,3}$.

\paragraph{Multiple horizontal moves}
For all the diagrams mentioned so far, the simplification was achieved
through a sequence of non-increasing $Z_{1,2,3}$ moves with \emph{at most one} consecutive
horizontal $Z_3$. We describe here a first examples showing that multiple horizontal $Z_3$'s may be required.
It was provided to us by the referee, and it is shown in
Figure~\ref{138:fig}.
\begin{figure}
    \begin{center}
    \includegraphics[scale=.6]{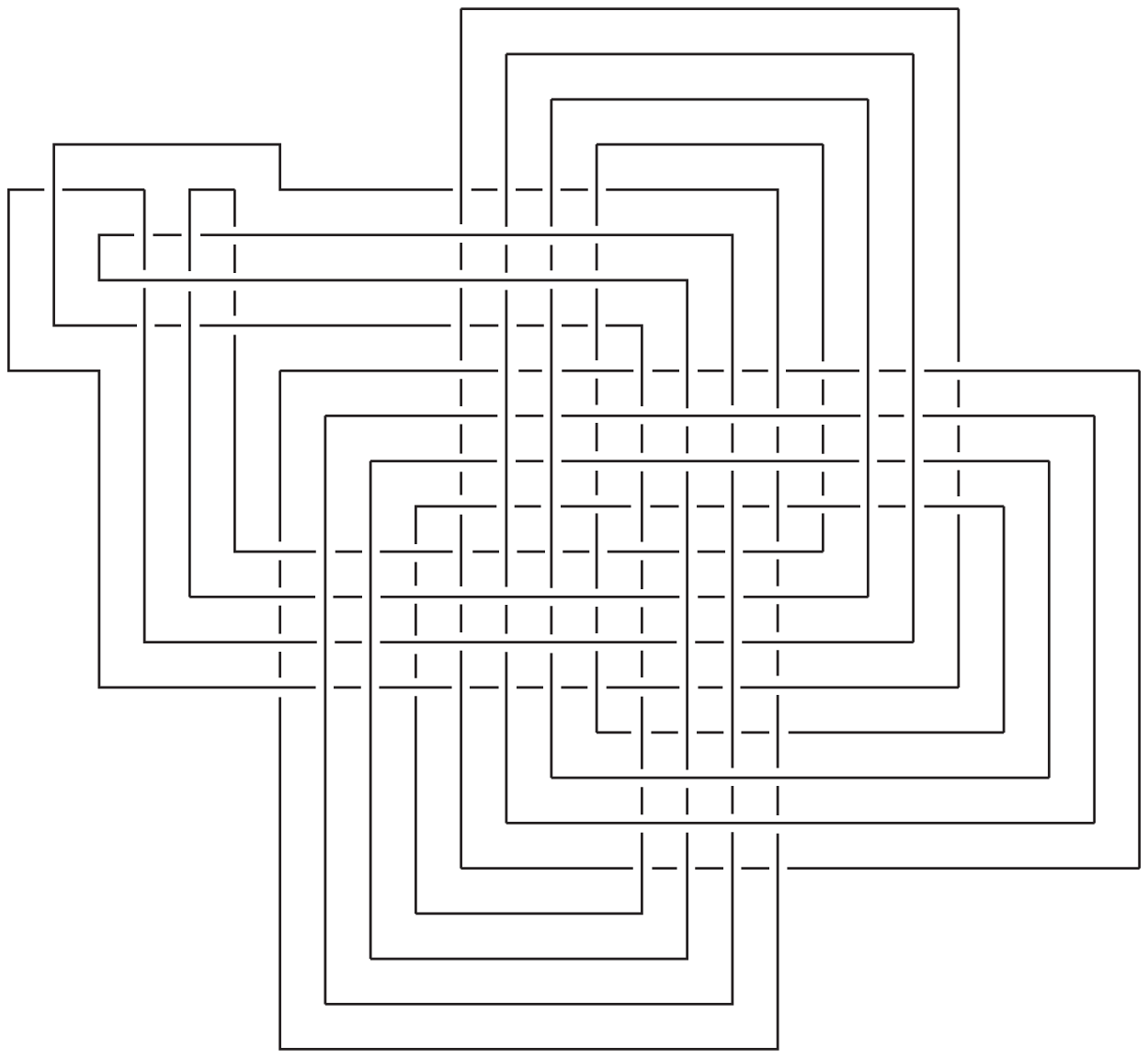}
    \end{center}
\mycap{A diagram of the unknot with 138 crossings.\label{138:fig}}
\end{figure}
Our reduced algorithm manages to untangle it using a sequence of $37$ moves of
type $Z_{1,2,3}$, three of which horizontal $Z_3$'s, including two consecutive ones at level 137.
See also Fig.~\ref{138C:fig} below.

\paragraph{Quick simplifications}
We mention here two more diagrams of the unknot, both with 7 crossings,
one from~\cite{PraSos} and one known as the ``nasty'' diagram. They both
simplify monotonically also via Reidemeister moves, but their untanglement
via moves $Z_{1,2,3}$ is particularly efficient: only one $Z_1$ suffices for the first diagram,
while a $Z_2$ and a $Z_1$ suffice for the second one.

\paragraph{Simplification of diagrams of non-trivial knots}
In Fig.~\ref{trefoil:fig}-left
\begin{figure}
    \begin{center}
    \includegraphics[scale=.6]{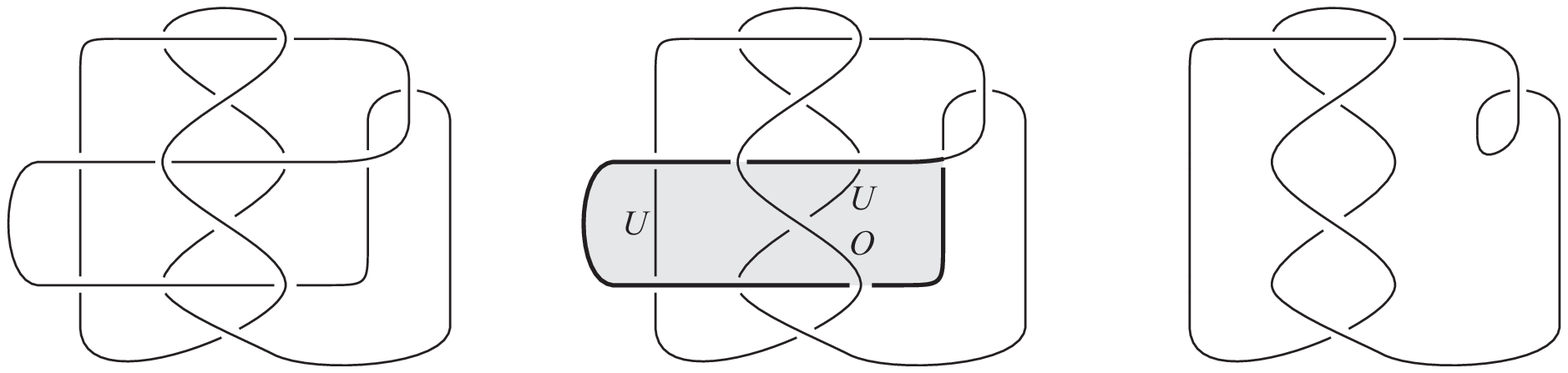}
    \end{center}
\mycap{Simplification of a diagram of the trefoil.\label{trefoil:fig}}
\end{figure}
we show a diagram~\cite{PraSos} of the trefoil, and the rest of the picture proves that
it reduces to the usual minimal diagram via one $Z_1$ move, followed by an obvious
$R_1$ and an obvious $R_2$ (not illustrated). We should mention that this diagram is also
monotonically reduced to a minimal one using Reidemeister moves only, but six of them are
required.

Something similar happens
for the diagram from~\cite{PraSos} of the figure-eight knot,
shown in Fig.~\ref{eight:fig}.
\begin{figure}
    \begin{center}
    \includegraphics[scale=.6]{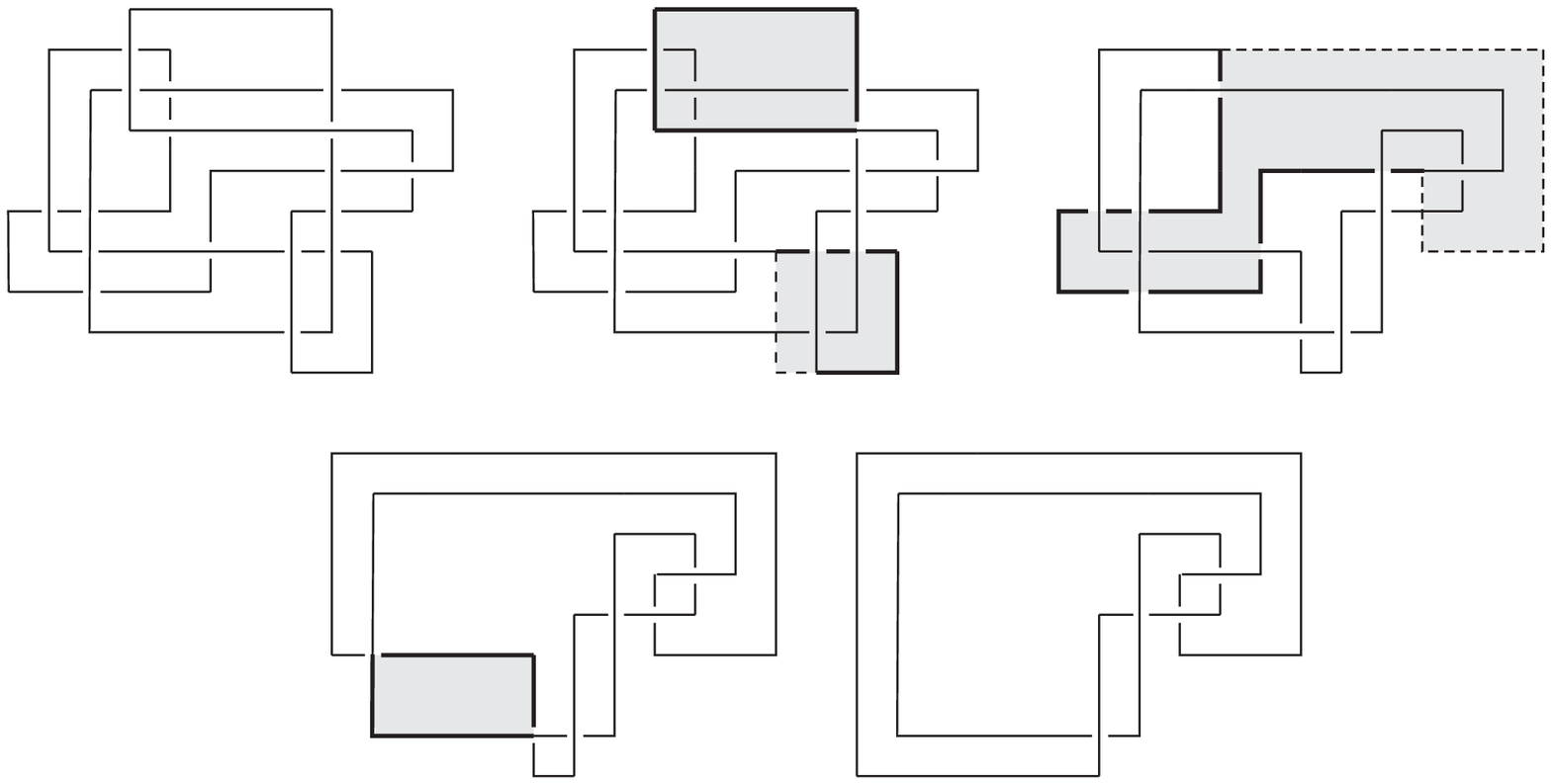}
    \end{center}
\mycap{Simplification of a diagram of the figure-eight knot using a move $Z_1$ and
a simultaneous $Z_3$, then a $Z_3$ and lastly a $Z_2$ (actually, an $R_2$).\label{eight:fig}}
\end{figure}
Via Reidemeister moves, this diagram reduces to 4 crossings monotonically, but only in a dozen passages,
while procedure $\calP$ completely simplifies it in 4 moves (in various different ways).
Note that in Fig.~\ref{eight:fig} we show on the same diagram the result of a move and the identification of the next one, and we will do the same henceforth.

\paragraph{The Kazantsev knot}
A rather complicated non-trivial knot we treat comes from~\cite{kaz} and is shown in Fig.~\ref{kazanknot:fig}.
\begin{figure}
    \begin{center}
    \includegraphics[scale=.6]{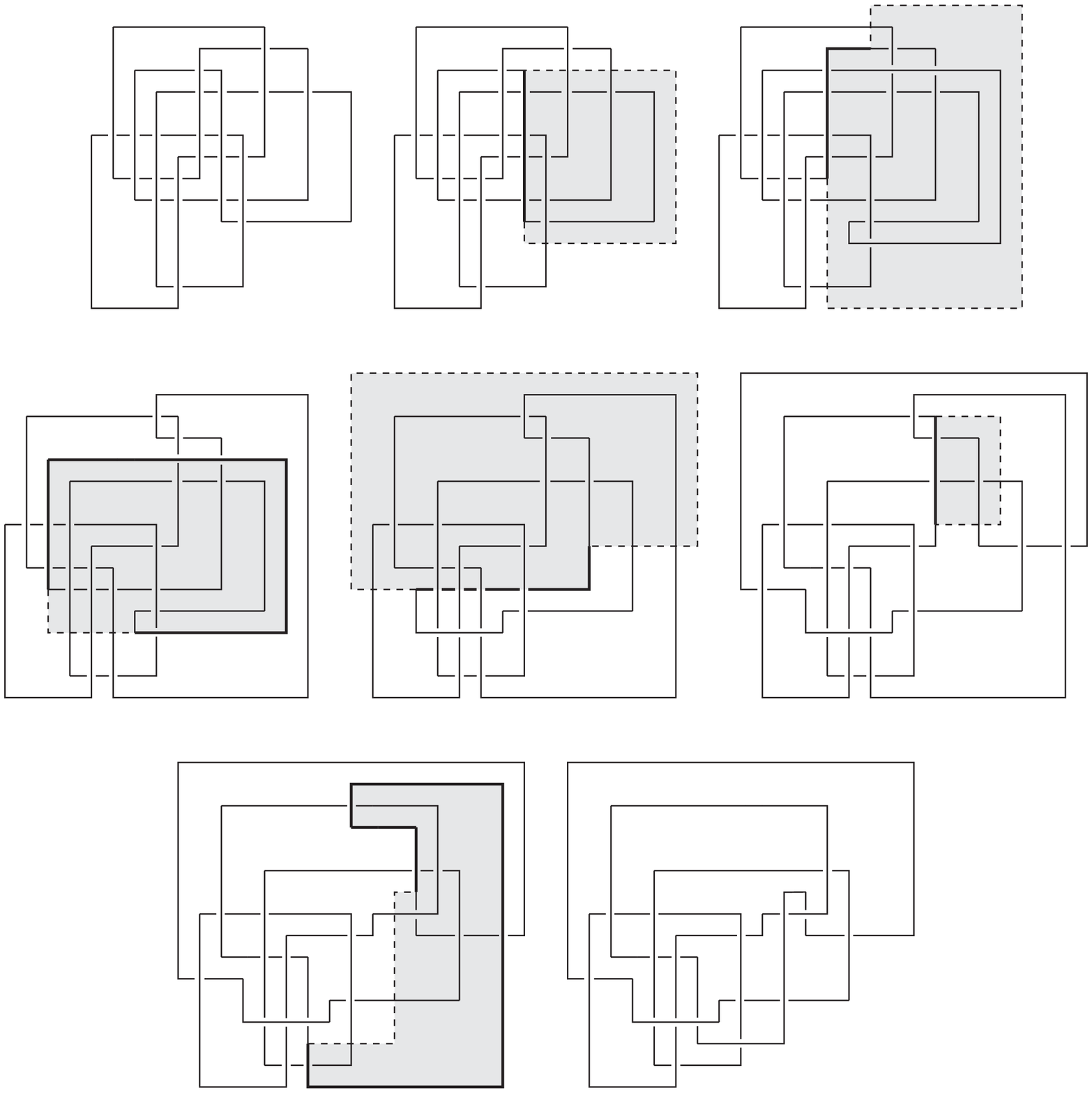}
    \end{center}
\mycap{(Non-strictly) monotonic simplification via $Z_3$ of the Kazantsev knot.\label{kazanknot:fig}}
\end{figure}
This diagram has 23 crossings and cannot be monotonically simplified via Reidemeister moves,
whereas an application of procedure $\calP$ with 6 moves of type $Z_3$ (one of which is horizontal, and actually an $R_3$)
reduces it to its minimal status with $17$ crossings.

\paragraph{A composite knot}
Another interesting example of the efficiency of procedure $\calP$ is illustrated in Fig.~\ref{trefeight:fig}.
\begin{figure}
    \begin{center}
    \includegraphics[scale=.6]{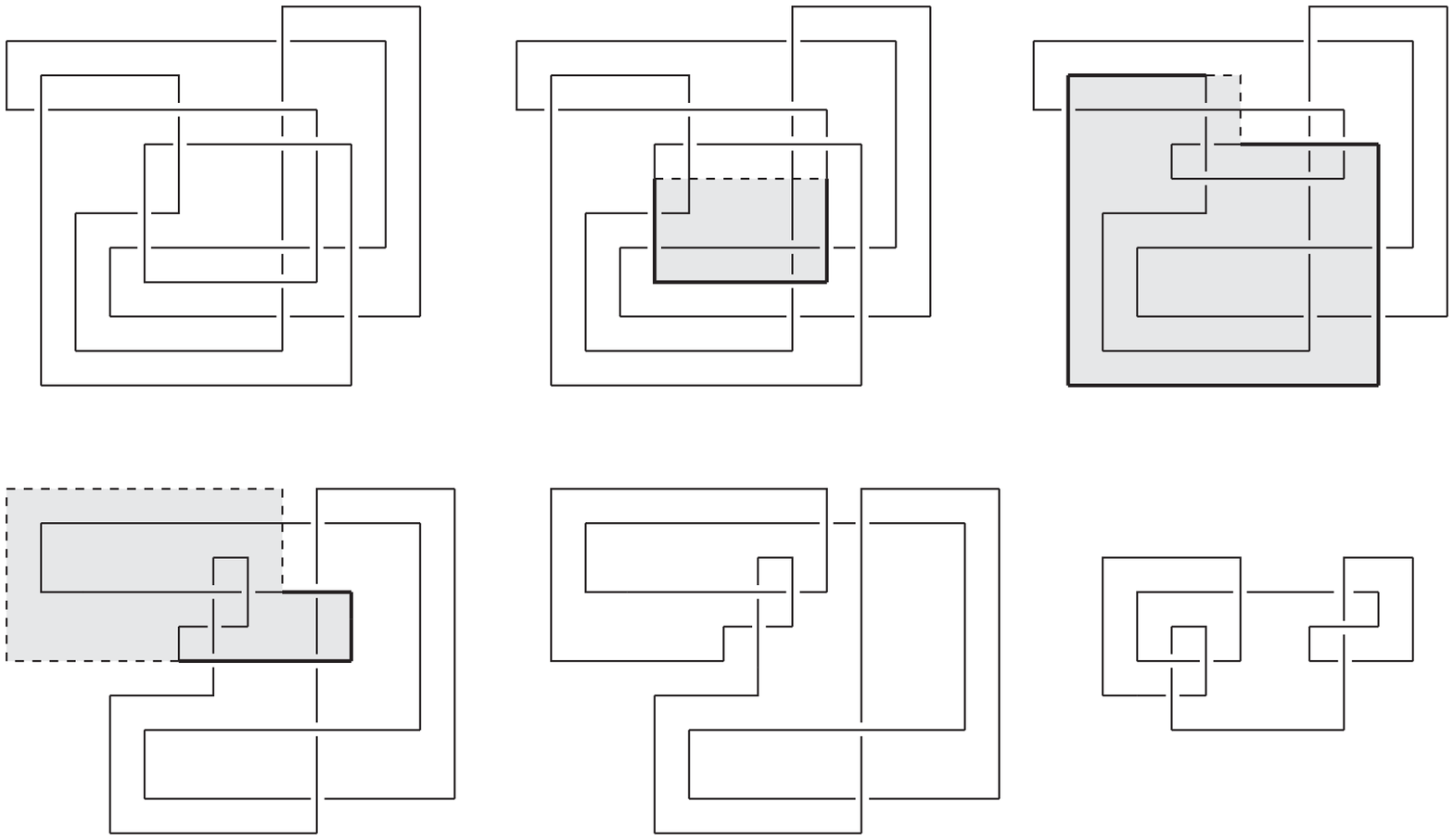}
    \end{center}
\mycap{Simplification of a connected sum of a trefoil and a figure-eight knot.\label{trefeight:fig}}
\end{figure}
Here we show a diagram with 15 crossings that cannot be monotonically
simplified via Reidemeister moves, while three strictly decreasing $Z_3$'s
lead to a minimal diagram,
that actually identifies the
knot as the connected sum of a trefoil and a figure-eight knot (the last diagram is a mere redrawing
of the previous one, to make the connected sum structure more apparent).

\paragraph{Two big non-trivial knots}
We conclude by showing two diagrams of non-trivial knots that the reduced procedure $\calP$ brings to
a minimal status but only using more than one consecutive horizontal $Z_3$.
The first diagram was suggested to us by Malik Obeidin, and it is the diagram shown in
Fig.~\ref{malik:fig}.
\begin{figure}
    \begin{center}
    \includegraphics[scale=.6]{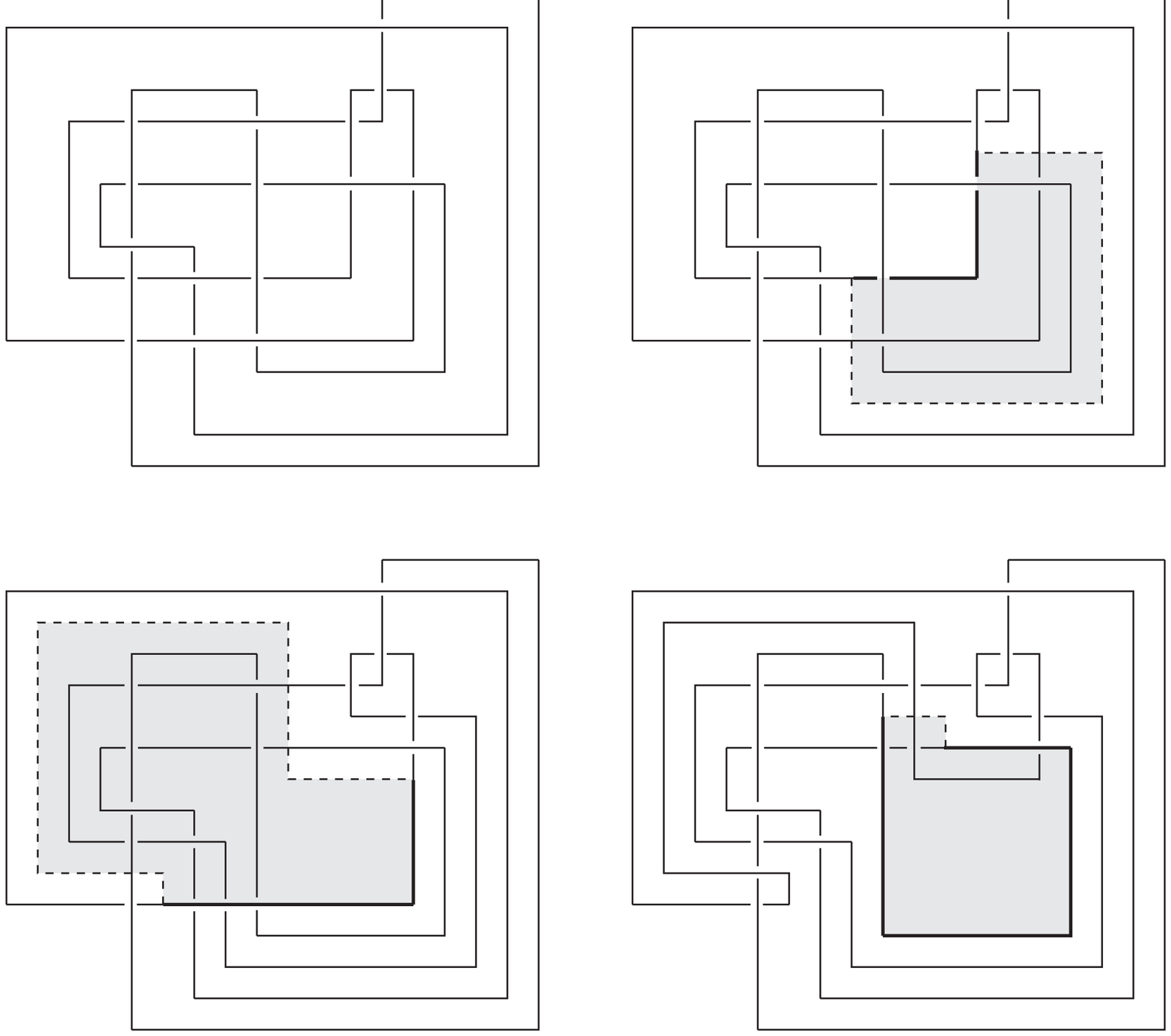}
    \end{center}
\mycap{A 16-crossing diagram and two horizontal $Z_3$ moves applied to it,
after which there exists a $Z_3$ decreasing $c$ by $1$ (outcome of the move not shown).\label{malik:fig}}
\end{figure}
The same figure shows how to transform it into a minimal diagram
via two horizontal $Z_3$ moves followed by a decreasing one.
The second diagram may be found in~\cite{zanweb}. It has
$32$ crossings and $\calP$ turns it into a minimal diagram with $28$
via a sequence of 10 moves, and more precisely:
\begin{itemize}
\item Two horizontal $Z_3$ moves at level $32$;
\item One decreasing $Z_2$ move leading to level $30$;
\item Three horizontal $Z_2$ moves at level $30$;
\item One decreasing move $Z_3$ leading to level $29$;
\item Two horizontal moves $Z_3$ at level $29$;
\item One decreasing move $Z_3$ leading to level $28$.
\end{itemize}

\section{Detection of composite links\\ and the complete algorithm}\label{Cmove:section}

In this section we introduce a further move $C$ (and a variant $\Ctil$ of it)
that applied to a diagram $D$ of a link $L$ gives diagrams $D_0,D_1$ of links $L_0,L_1$
such that $L$ is the connected sum of $L_0$ and $L_1$.  Moreover $D_0$ and $D_1$ have
strictly fewer crossings than $D$.  This allows to break the task of simplifying $D$ into
the same task separately for $D_0$ and $D_1$.  In particular:
\begin{itemize}
\item If via moves $Z_{1,2,3}$ we can show that
both $[D_0]$ and $[D_1]$ are trivial, we can conclude that $[D]$ is also trivial;
\item If via moves $Z_{1,2,3}$ we can show that one of $D_0$ or $D_1$ is trivial
and we can transform the other one into a minimal $D'$, we can conclude that
$c([D])=c(D')$;
\item If via moves $Z_{1,2,3}$ we can transform $D_0$ or $D_1$ into minimal
$D_0'$ and $D_1'$, \emph{assuming that the conjectural~\cite{lack1}
additivity of the crossing number is true},
we can conclude that $c([D])=c(D_0')+c(D_1')$.
\end{itemize}
Our software~\cite{zanweb} already contains the implementation of the moves $C$ and $\Ctil$
and performs the simplification of diagrams using them (but we recall that it
currently handles knots only). The software can be asked by
the user to look for a move $C$ or $\Ctil$, while
in the quick simplification procedure it only uses these moves if it gets stuck with the $Z_{1,2,3}$.

\paragraph{The move $C$}
Let $p:\mathop{\sqcup}\limits_{i=1}^p S_i^1\to S^2$ be the immersion associated to a link diagram $D$.
Let $\Omega\subset S^2$ be a tame topological disc with $\partial\Omega$ transverse to $D$,
and $\alpha,\beta_1,\ldots,\beta_N$ be the components of
$p^{-1}(\Omega)$. Suppose that $\alpha$ is an arc and that one can assign labels
$\lambda_1,\ldots,\lambda_N$ in $\{U,O\}$ to $\beta_1,\ldots,\beta_N$ so that
(see Fig.~\ref{Cmove:fig}):
\begin{figure}
    \begin{center}
    \includegraphics[scale=.45]{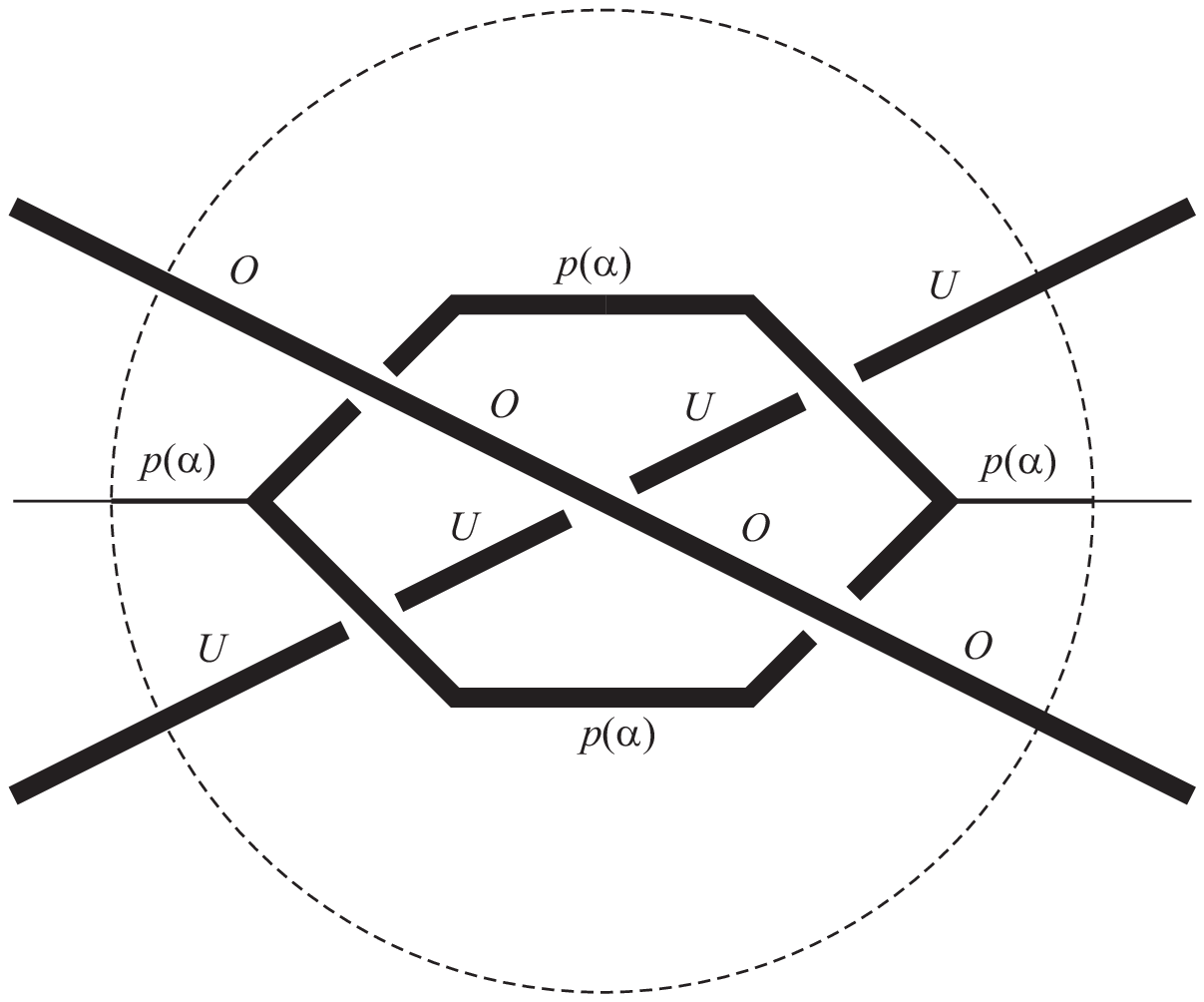}
    \end{center}
\mycap{A diagram $D$ to which a move $C$ applies.\label{Cmove:fig}}
\end{figure}
\begin{itemize}
\item If $\lambda_j=O$ then $p(\beta_j)$ is over $p(\alpha)$ where they cross;
\item If $\lambda_j=U$ then $p(\beta_j)$ is under $p(\alpha)$ where they cross;
\item If $\lambda_j=O$ and $\lambda_k=U$ then $p(\beta_j)$ is over $p(\beta_k)$ where they cross.
\end{itemize}
Let $\gamma$ be any of the two halves of $\partial\Omega$ into
which the ends of $p(\alpha)$ split it.
Given these data, we call $C$ the move that replaces $D$ by the pair of diagrams $(D_0,D_1)$, where (see Fig.~\ref{Cmove01:fig}):
\begin{itemize}
\item $D_0$ is $p(\alpha)$ union $\gamma$ with crossings as in $D$ (none on $\gamma$);
\item $D_1$ is $D$ minus $p(\alpha)$ with crossings as in $D$, union $\gamma$, with $\gamma$ under any
$\beta_j$ with $\lambda_j=O$, and over any $\beta_j$ with $\lambda_j=U$.
\end{itemize}
\begin{figure}
    \begin{center}
    \includegraphics[scale=.45]{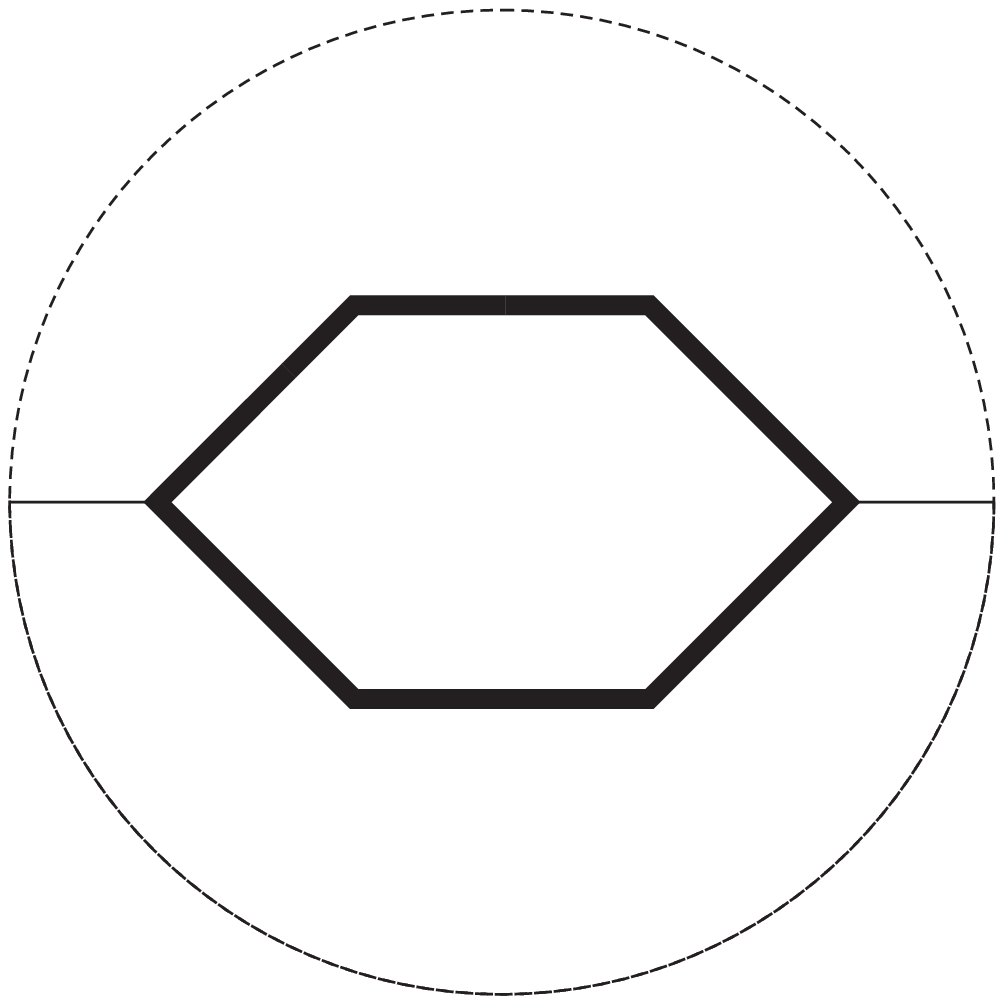}\qquad
    \includegraphics[scale=.45]{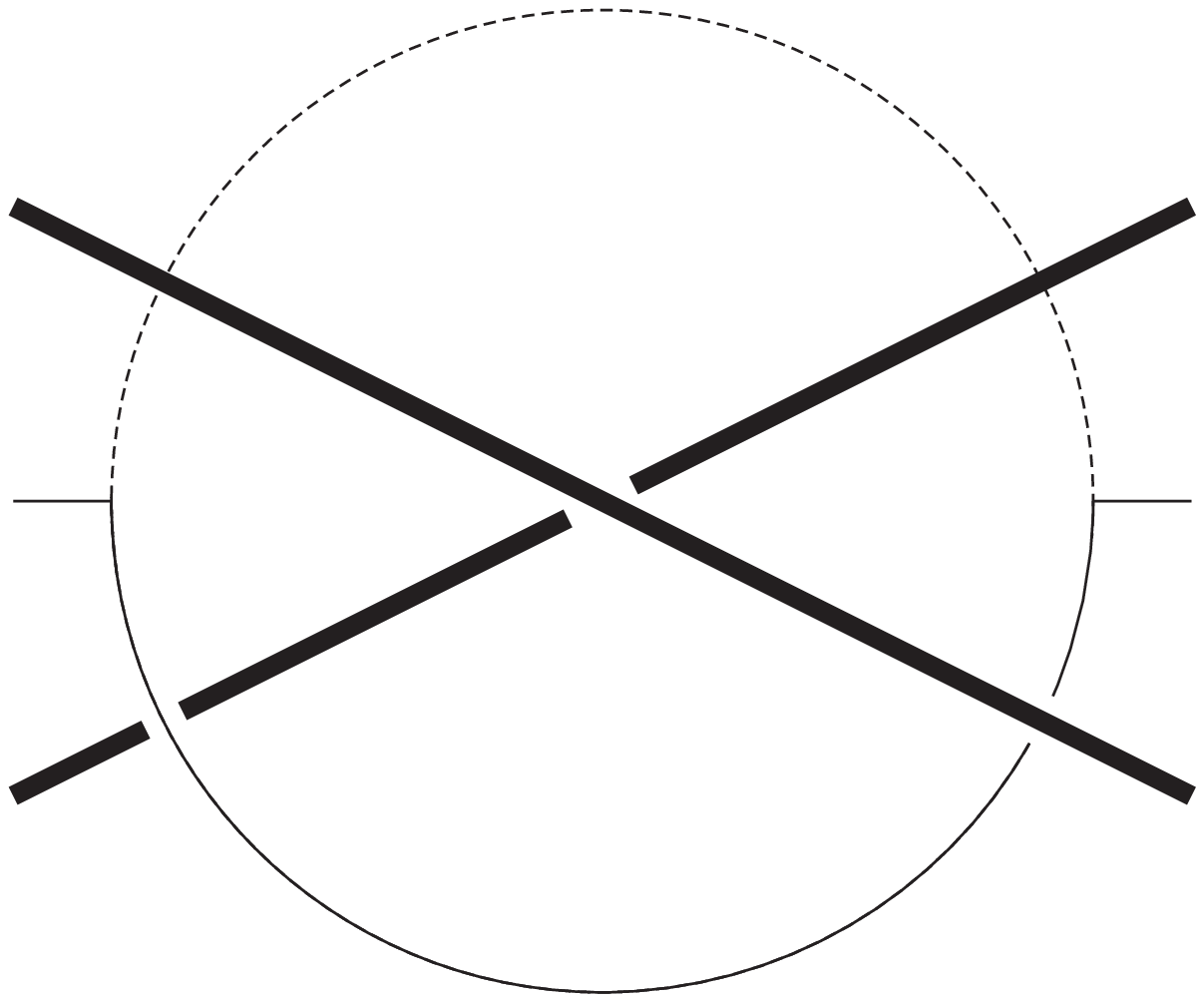}
    \end{center}
\mycap{The diagrams $D_0$ and $D_1$ produced by $C$.\label{Cmove01:fig}}
\end{figure}

\begin{prop}
$[D]=[D_0]\#[D_1]$.
\end{prop}

\begin{proof}
The assumptions imply that $[D]$ can be realized as a link $L$ in $\matR^3$ such that
$L\cap(\Omega\times\matR)$ is the disjoint union of three portions:
\begin{itemize}
\item One in $\Omega\times(-\varepsilon,\varepsilon)$, with projection $p(\alpha)$;
\item One in $\Omega\times(1-\varepsilon,1+\varepsilon)$, with projection $\bigcup\{p(\beta_i):\ \lambda_i=O\}$;
\item One in $\Omega\times(-1-\varepsilon,-1+\varepsilon)$, with projection $\bigcup\{p(\beta_i):\ \lambda_i=U\}$.
\end{itemize}
So the sphere $\partial(\Omega\times[-\varepsilon,\varepsilon])$ meets $L$
transversely at two points, whence it allows to express $L$ as $L_0\# L_1$, and of course $L_j=[D_j]$.
\end{proof}

\begin{rem}
\emph{If in the definition of the move $C$ we have that $\alpha$ is not an arc but
one of the circles $S^1_i$ then
$L$ is the split link $[D_0]\sqcup[D_1]$, where $D_0$ is $p(\alpha)$
and $D_1$ is $D$ with $p(\alpha)$ removed, both with crossings as in $D$.}
\end{rem}

We now note the following:
\begin{itemize}
\item If the ends of $p(\beta_i)$ separate the ends of $p(\alpha)$ on $\partial\Omega$
then $p(\beta_i)$ crosses $p(\alpha)$ at least once.
\end{itemize}
We also stipulate that the move $C$ can only be applied if the following
further assumption is met (otherwise we can replace $\Omega$ by a smaller disc):
\begin{itemize}
\item If the ends of $p(\beta_i)$ do not separate the ends of $p(\alpha)$ on $\partial\Omega$
then $p(\beta_i)$ crosses $p(\alpha)$ at least twice.
\end{itemize}
The remark and the assumption just made easily imply the following:

\begin{prop}\label{stupid:C:prop}
\begin{itemize}
\item $c(D_0)+c(D_1)\leqslant c(D)$;
\item $c(D_0)<c(D)$ unless $p^{-1}(\Omega)=\alpha$ and $D$ has no crossings outside $\Omega$, in which case $D_0=D$.
\item $c(D_1)<c(D)$ unless $p(\alpha)$ has no self-crossing and $p(\beta_j)$ crosses
$p(\alpha)$ as many times as it crosses $\gamma$, in which case $D_1=D$.
\end{itemize}
\end{prop}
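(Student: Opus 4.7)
The plan is to partition the crossings of $D$ into four disjoint classes and then express $c(D)$, $c(D_0)$, and $c(D_1)$ as explicit sums in terms of them. Let $a$ be the number of self-crossings of $p(\alpha)$ (all lying in $\Omega$), $b$ the number of crossings of $p(\alpha)$ with the various $p(\beta_j)$, $c$ the number of crossings of $D$ inside $\Omega$ not involving $p(\alpha)$ (i.e., among the $p(\beta_j)$'s), and $d$ the number of crossings of $D$ lying outside $\Omega$, so that $c(D)=a+b+c+d$. Reading from the definitions of $D_0$ and $D_1$, with $\gamma$ slightly pushed into $\Omega$ to make it transverse to each $p(\beta_j)$, one immediately gets $c(D_0)=a$ and $c(D_1)=c+d+g$, where $g=\sum_j\#(\gamma\cap p(\beta_j))$.

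The crucial step is the inequality $g\leqslant b$, which I would establish componentwise by showing $\#(\gamma\cap p(\beta_j))\leqslant\#(p(\alpha)\cap p(\beta_j))$ for each $j$. If $\beta_j$ is a circle then its image is disjoint from $\partial\Omega$, so the left-hand side is $0$ and the inequality is trivial. If $\beta_j$ is an arc, $\#(\gamma\cap p(\beta_j))$ equals the number of endpoints of $p(\beta_j)$ lying on $\gamma$, which is $0$, $1$, or $2$. The value $1$ is exactly the separating case of the remark preceding the proposition, and that remark gives $\#(p(\alpha)\cap p(\beta_j))\geqslant 1$; the value $2$ is the non-separating case with both endpoints on $\gamma$, and the additional minimality assumption on $\Omega$ gives $\#(p(\alpha)\cap p(\beta_j))\geqslant 2$; the value $0$ is again trivial.

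Given the inequality $g\leqslant b$, all three bullets follow in turn. The first is the direct computation $c(D_0)+c(D_1)=a+c+d+g\leqslant a+b+c+d=c(D)$. For the second, $c(D)-c(D_0)=b+c+d$ is non-negative and vanishes iff $b=c=d=0$; under the standing non-split hypothesis on $D$, this forces the absence of every $\beta_j$ (an arc $\beta_j$ would contribute to $b$ by the remark or assumption, while a circle $\beta_j$ with $b=c=0$ would represent a split component of the link) and also $d=0$, which is precisely the stated exceptional condition and clearly gives $D_0=D$ up to planar isotopy. For the third, $c(D)-c(D_1)=a+(b-g)=a+\sum_j\bigl(\#(p(\alpha)\cap p(\beta_j))-\#(\gamma\cap p(\beta_j))\bigr)\geqslant a\geqslant 0$ vanishes iff $a=0$ and every summand vanishes, which matches the stated condition; then $p(\alpha)$ is an embedded arc in $\overline\Omega$ with the same endpoints on $\partial\Omega$ as $\gamma$ and meeting each $p(\beta_j)$ with the same number of crossings and the same over/under pattern as $\gamma$ does in $D_1$, so an ambient isotopy of $\overline\Omega$ fixing $\partial\Omega$ pointwise can be chosen to carry $p(\alpha)$ onto $\gamma$, exhibiting $D_1=D$ as diagrams.

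The main obstacle is twofold: verifying the arc-by-arc inequality uses the additional minimality assumption on $\Omega$ stipulated immediately before the proposition in an essential way (without it the non-separating case would fail), and the identification $D_1=D$ in the equality case of the third bullet is not tautological, since it requires constructing an explicit ambient isotopy sliding the $b$ crossings along the $p(\beta_j)$'s from their positions on $p(\alpha)$ to their positions on $\gamma$.
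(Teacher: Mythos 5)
The paper offers no proof of this proposition at all---it merely asserts that the preceding remark and stipulation ``easily imply'' it---and your counting argument, which bounds $\#(\gamma\cap p(\beta_j))$ by $\#(p(\alpha)\cap p(\beta_j))$ arc by arc using the remark in the separating case and the extra minimality stipulation on $\Omega$ in the non-separating case, is exactly the intended route, carried out correctly. The only quibble is in the final equality-case discussion: an isotopy of $\overline\Omega$ fixing $\partial\Omega$ pointwise cannot literally carry $p(\alpha)$ onto $\gamma\subset\partial\Omega$ (one should instead push $p(\alpha)$ across the disc bounded by $p(\alpha)\cup\gamma$ onto a parallel copy of $\gamma$ just inside $\Omega$), but this affects only the side remark that $D_1=D$, not the three inequalities that constitute the substance of the statement.
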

Of course we do not apply the move $C$ in any of the situations described in the
last two items of this proposition.
An example of application of $C$ is provided in Figg.~\ref{Cexample:fig} and~\ref{Cexample01:fig}, and a
\begin{figure}
    \begin{center}
    \includegraphics[scale=.5]{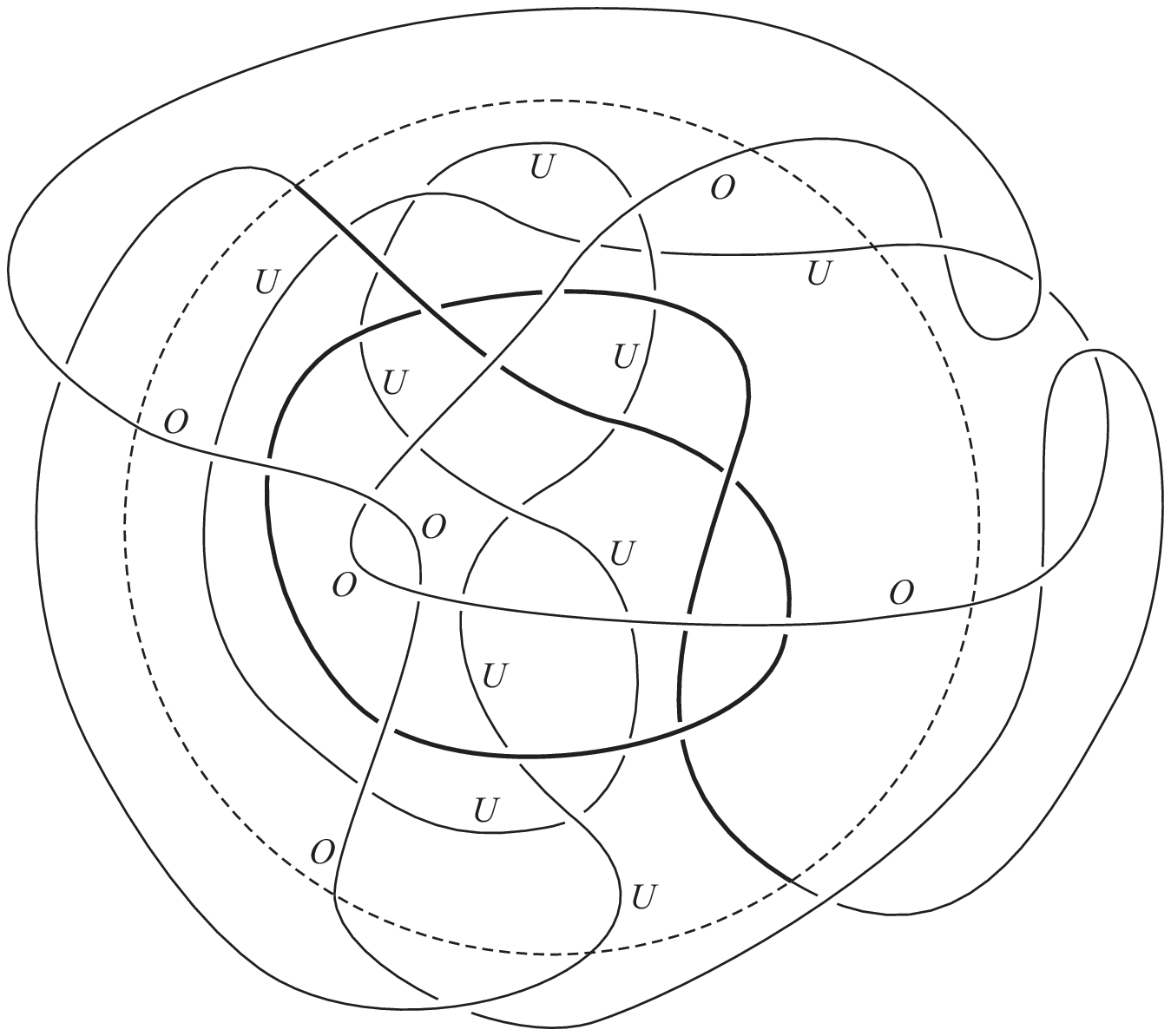}
    \end{center}
\mycap{A move $C$ applies to this diagram $D$. The disc $\Omega$ is the bounded region with the
dashed circle as boundary, and $p(\alpha)$ is the slightly thicker arc.\label{Cexample:fig}}
\end{figure}
\begin{figure}
    \begin{center}
    \includegraphics[scale=.5]{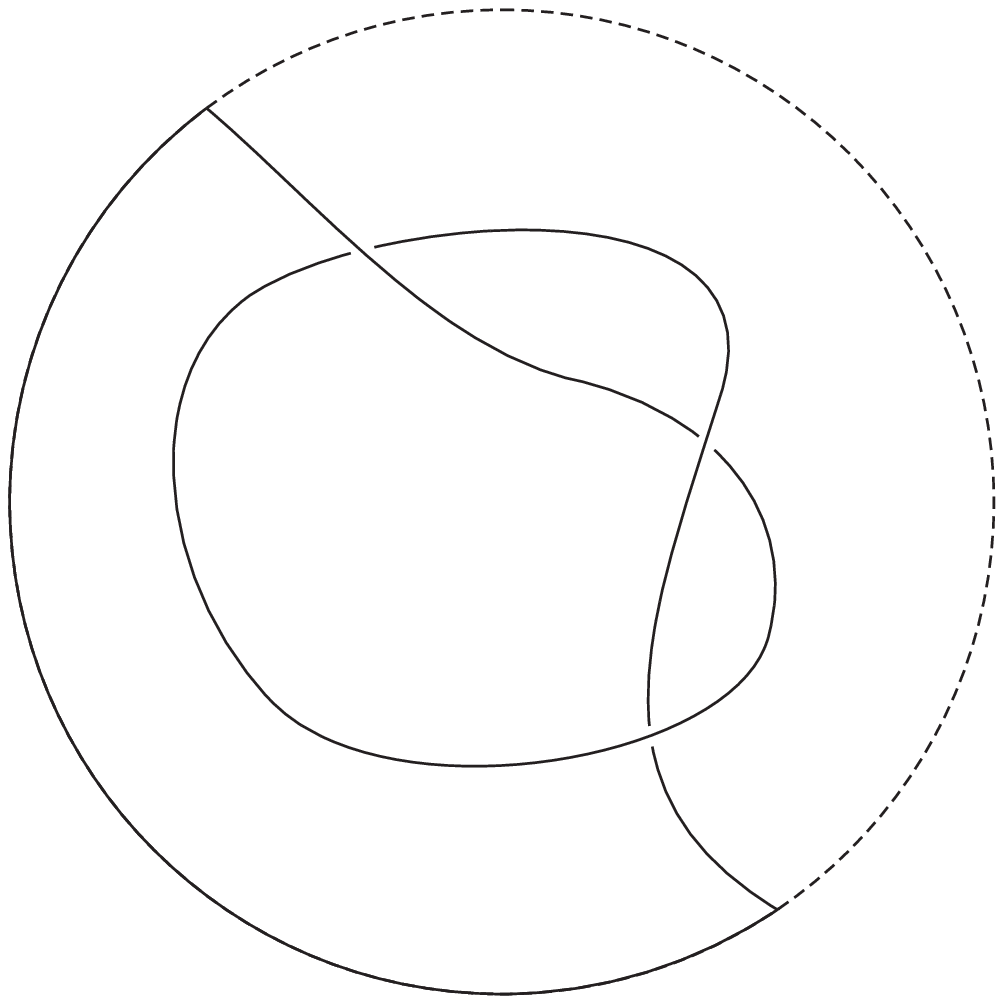}\qquad
    \includegraphics[scale=.5]{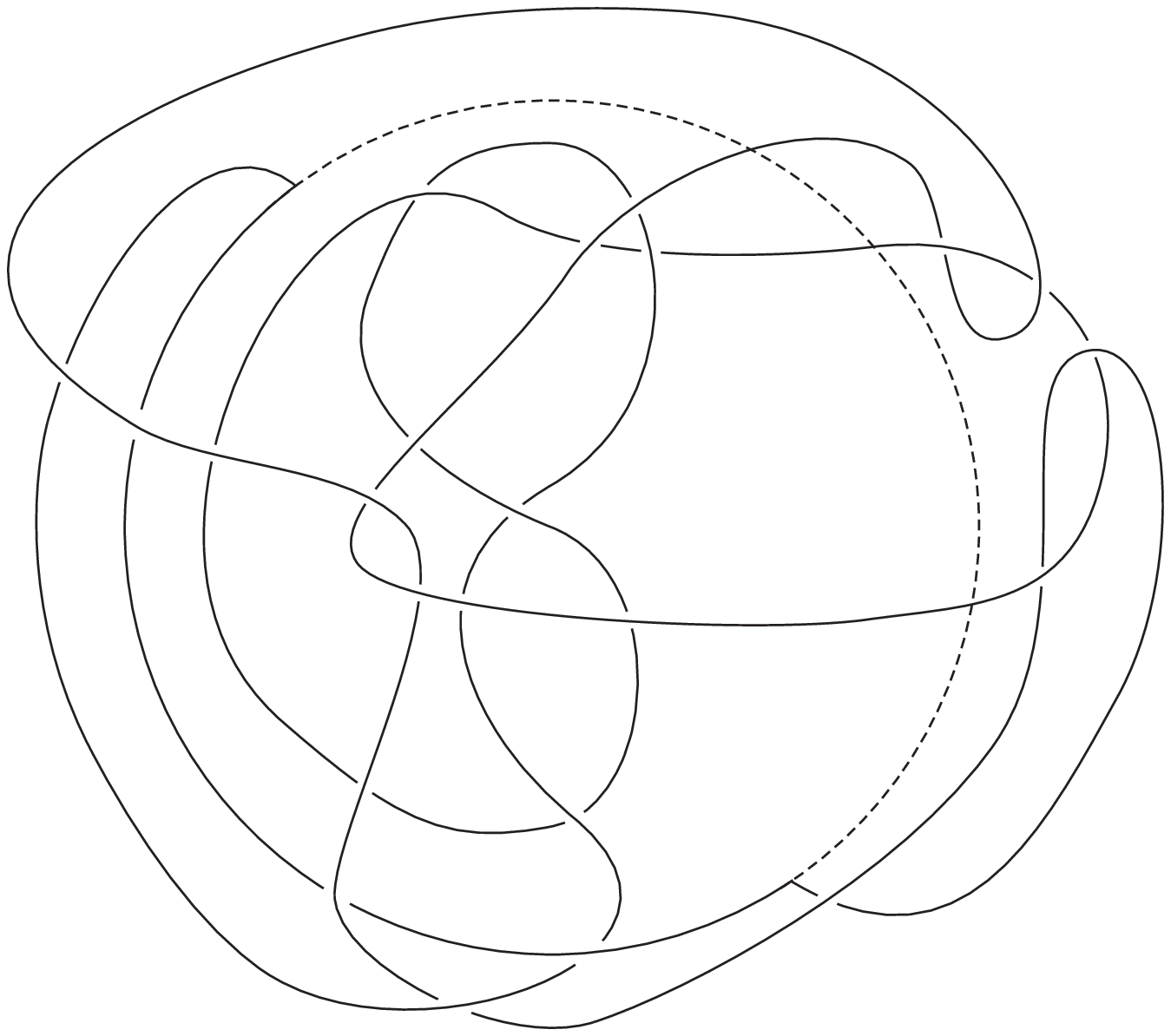}
    \end{center}
\mycap{Applyting $C$ to the $D$ of Fig.~\ref{Cexample:fig} we get these $D_0$ and $D_1$.\label{Cexample01:fig}}
\end{figure}
first example showing that $C$ is useful is described
in Fig.~\ref{138C:fig}, where
we prove that it applies to the 138-crossing diagram of the unknot
already considered in Fig.~\ref{138:fig}.
\begin{figure}
    \begin{center}
    \includegraphics[scale=.45]{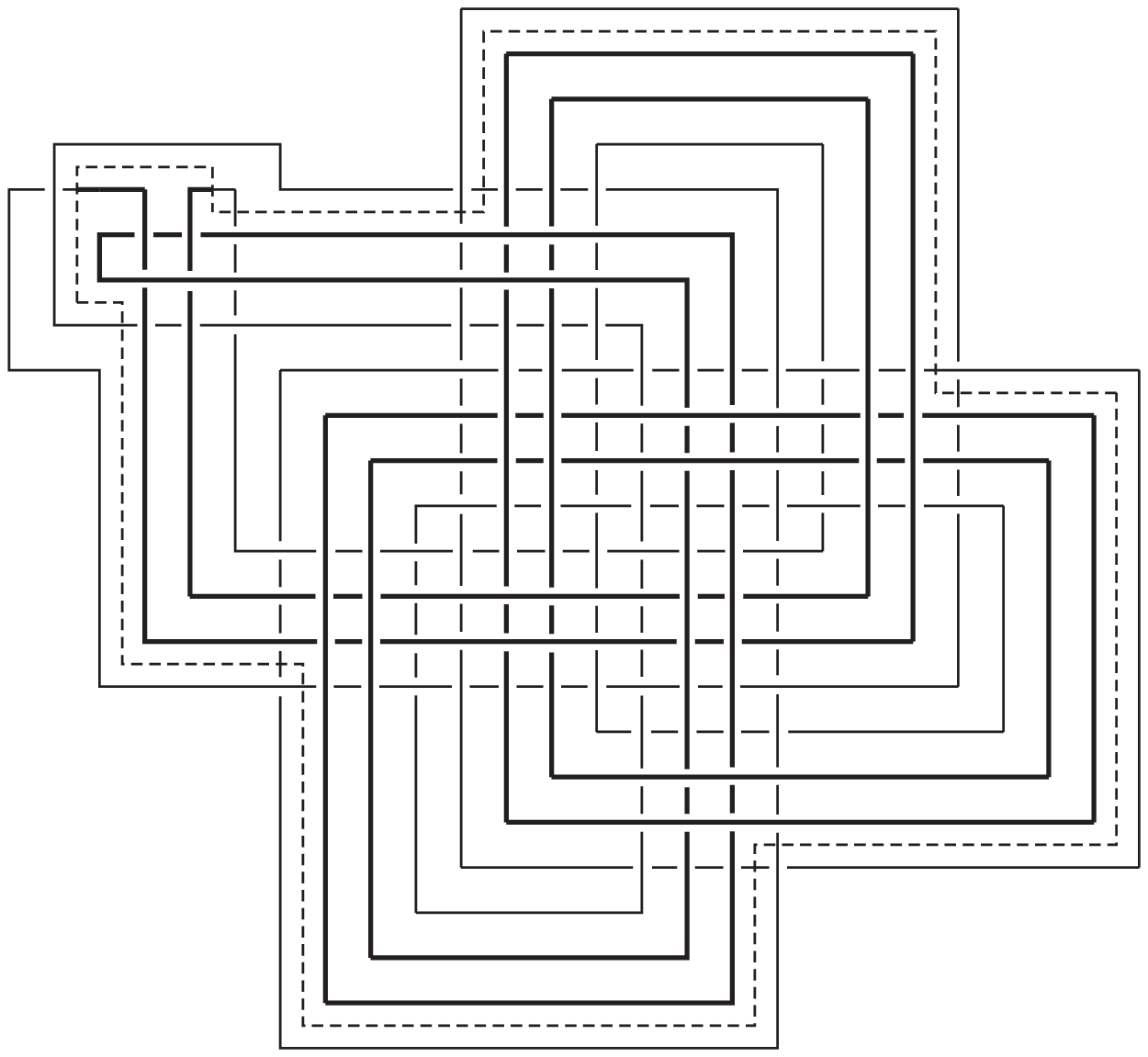}
    \end{center}
\mycap{A move $C$ applies to the diagram of Fig.~\ref{138:fig}.
The disc $\Omega$ is the bounded one with the dashed curve as boundary, $p(\alpha)$ is
the slightly thicker arc inside $\Omega$, and all the other arcs in $\Omega$ have label $U$.\label{138C:fig}}
\end{figure}
The two resulting diagrams, shown in Fig.~\ref{138C01:fig}
\begin{figure}
    \begin{center}
    \includegraphics[scale=.45]{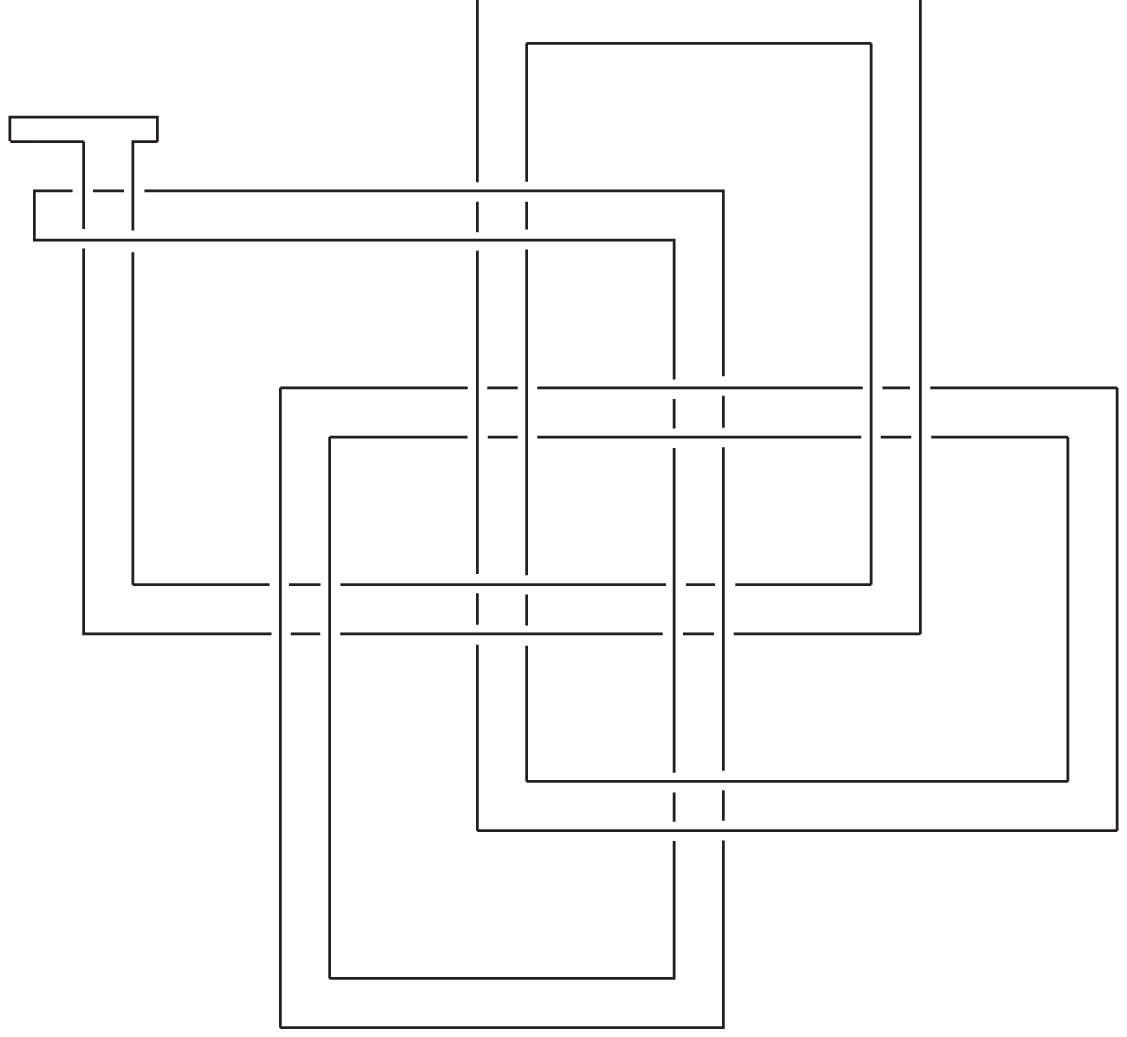}\qquad
    \includegraphics[scale=.45]{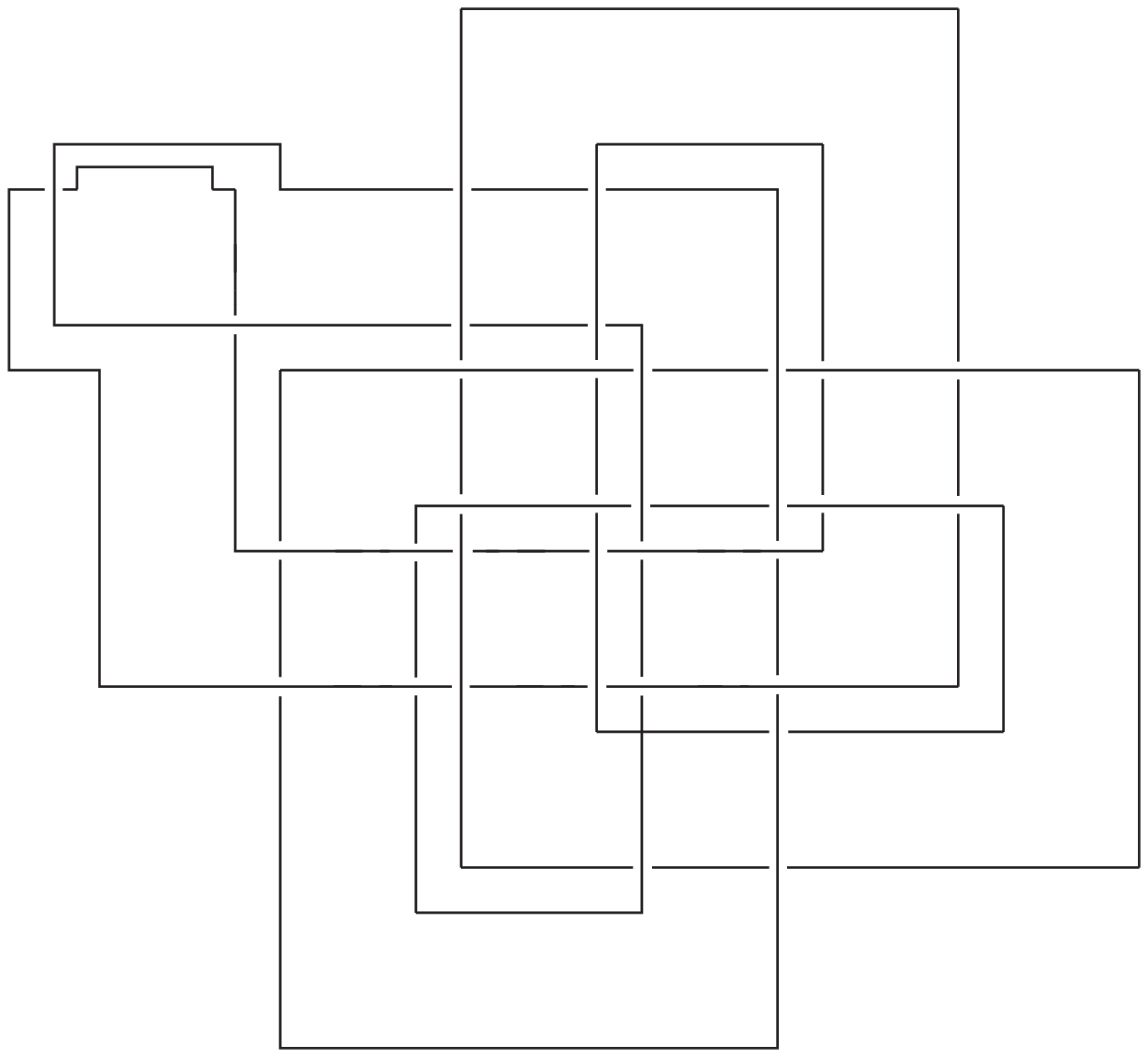}
    \end{center}
\mycap{Diagrams produced by the move $C$ of Fig.~\ref{138C:fig}.\label{138C01:fig}}
\end{figure}
are then both reduced via moves $Z_{1,2,3}$ to the trivial diagram in a handful of steps.
Another diagram for which $C$ is useful, even if not essential, is that of Fig.~\ref{trefeight:fig},
as shown in Fig.~\ref{trefeightCdisc:fig}.
\begin{figure}
    \begin{center}
    \includegraphics[scale=.6]{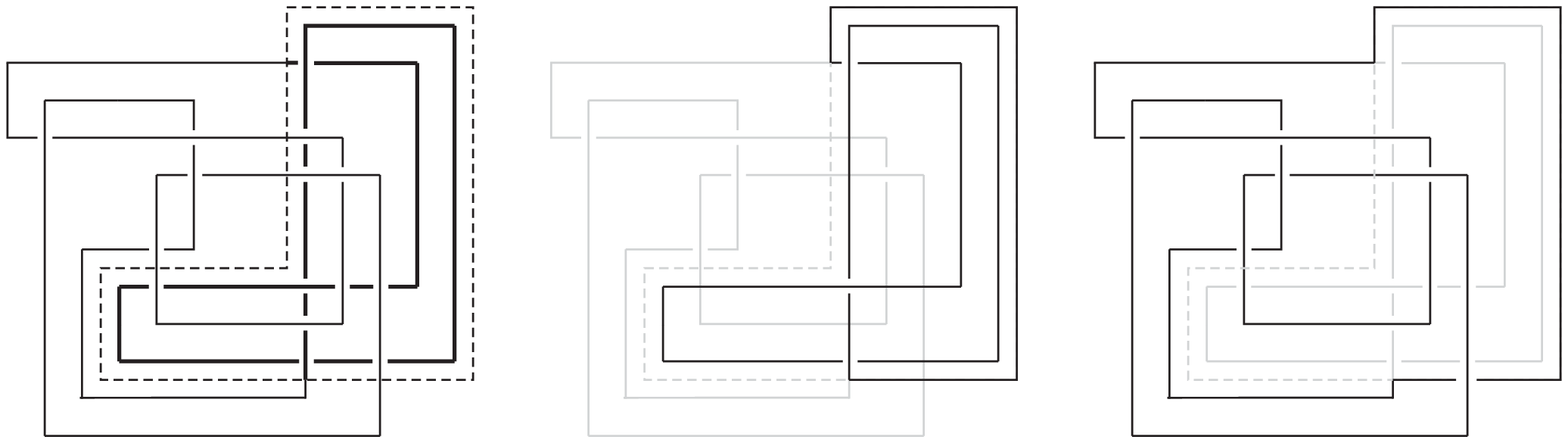}
    \end{center}
\mycap{Left: a move $C$ applies to the diagram of Fig.~\ref{trefeight:fig}.
The disc $\Omega$ is the bounded one with the dashed curve as boundary, $p(\alpha)$ is
the slightly thicker arc inside $\Omega$, and all the other arcs in $\Omega$ have label $O$.
Center and right: the diagrams produced by $C$ are the standard one of the trefoil and one
that becomes the standard one for the figure-eight after a more $R_2$.\label{trefeightCdisc:fig}}
\end{figure}

We now describe an example provided to us by the referee showing
that the move $C$ is actually essential. Figure~\ref{120:fig} displays a diagram with $120$ crossings,
\begin{figure}
    \begin{center}
    \includegraphics[scale=.6]{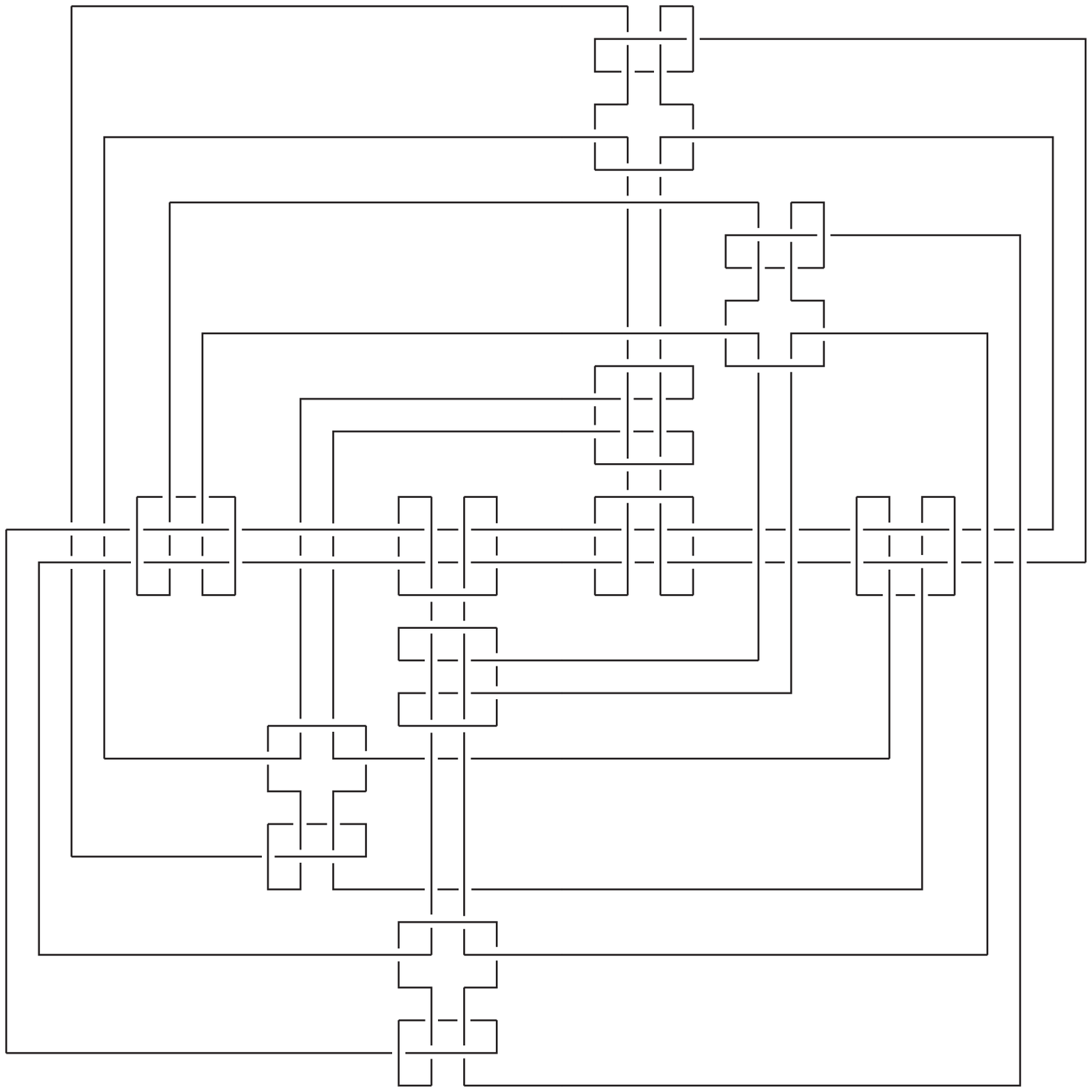}
    \end{center}
\mycap{A diagram of the unknot with 120 crossings.\label{120:fig}}
\end{figure}
which in fact represents the unknot, but our reduced algorithm
(using monotonic $Z_{1,2,3}$ moves only) was unable to untangle it.
On the other hand, our software found a $C$-move that applies to
the diagram, as shown in Fig.~\ref{120C:fig},
\begin{figure}
    \begin{center}
    \includegraphics[scale=.6]{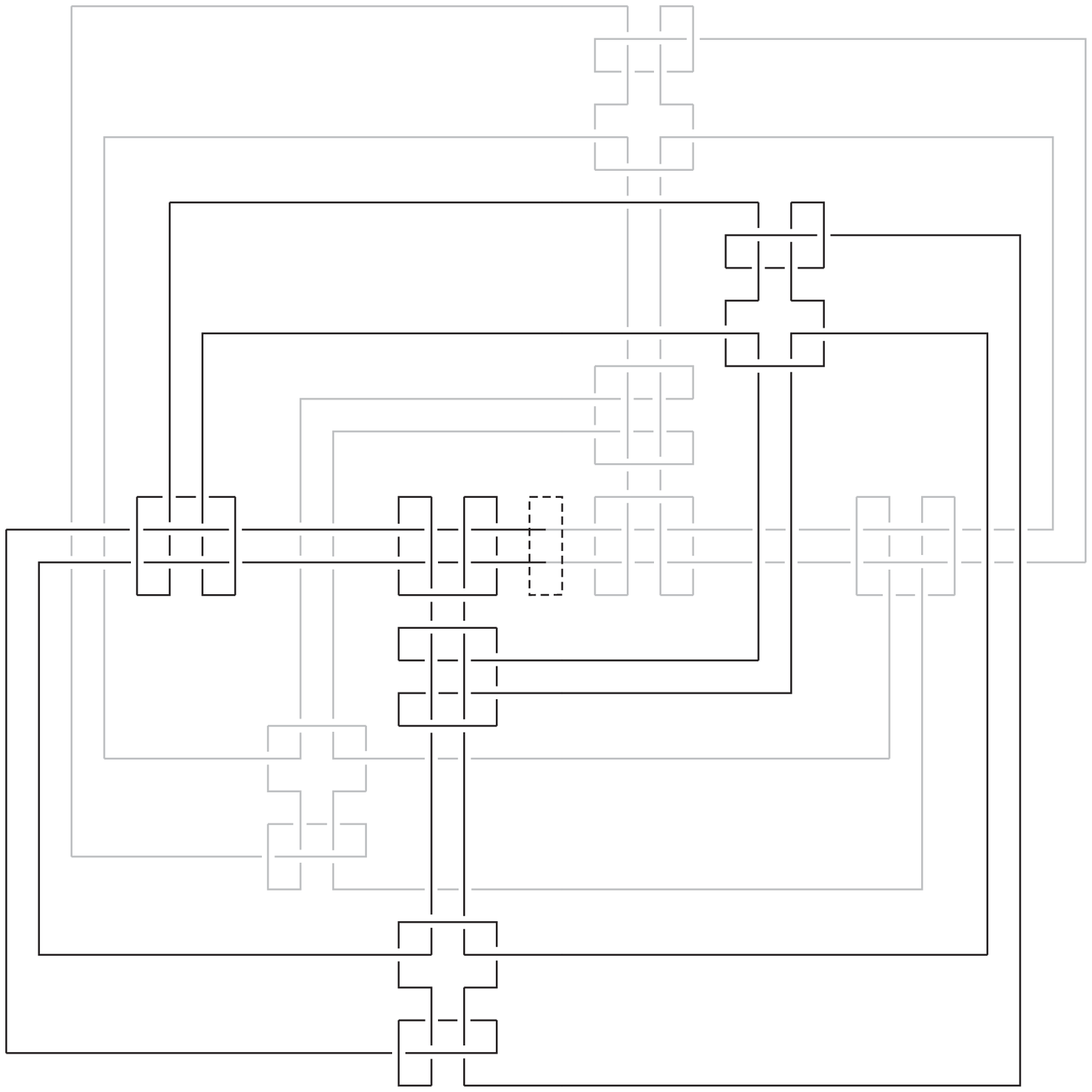}
    \end{center}
\mycap{A move $C$ applies to the diagram of Fig.~\ref{120:fig}.
The disc $\Omega$ is the unbounded one with the dashed curve as boundary,
and actually two interpretations of the move are possible: either one
declares the black arc in $\Omega$ to be $p(\alpha)$ and the gray one to have
label $U$, or one declares the gray arc to be $p(\alpha)$ and the
black one to have label $O$. The result is the same in either case.\label{120C:fig}}
\end{figure}
after which we get the diagrams of Fig.~\ref{120C01:fig},
\begin{figure}
    \begin{center}
    \includegraphics[scale=.6]{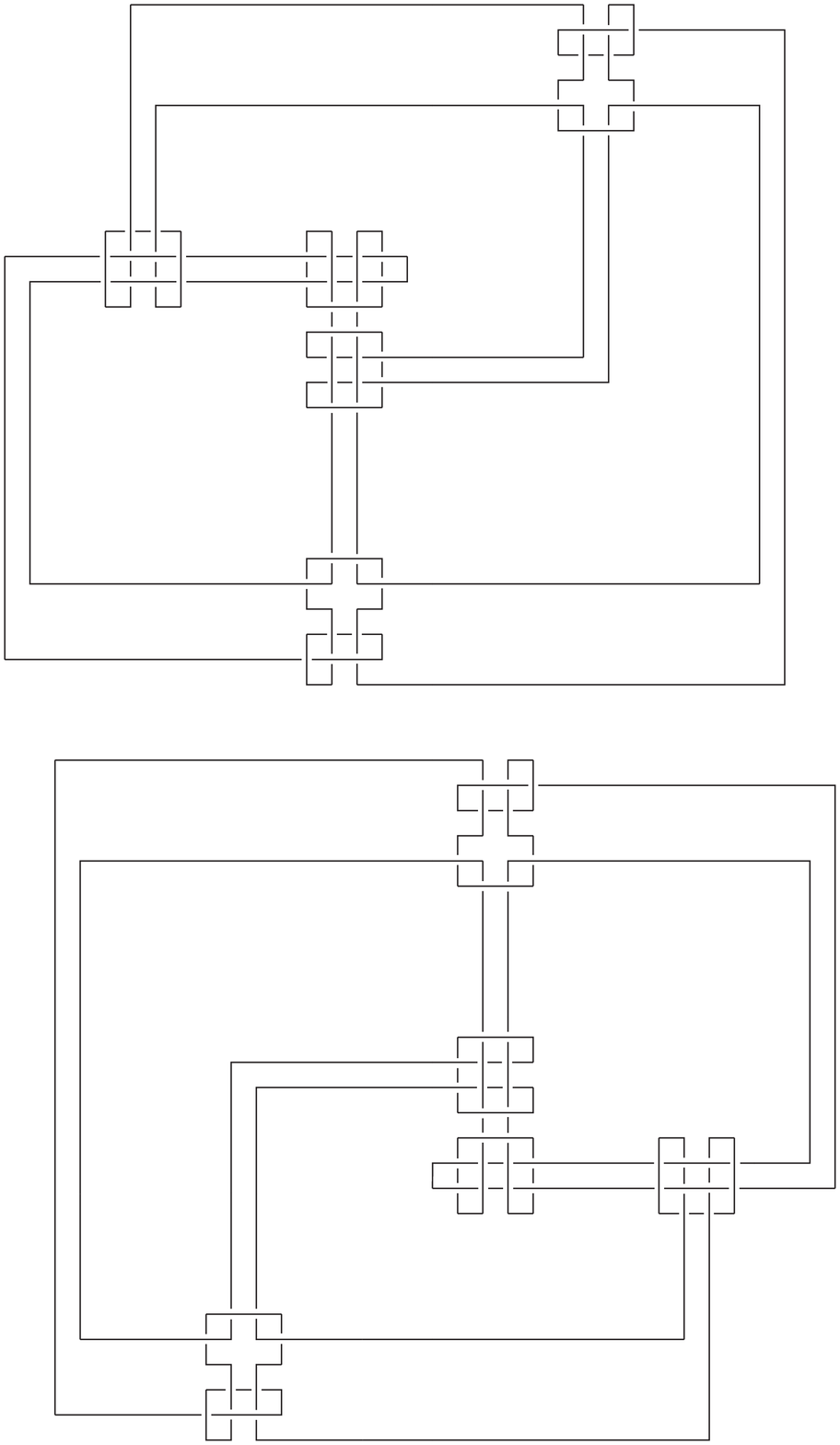}
    \end{center}
\mycap{The move $C$ of Fig.~\ref{120C:fig} gives these diagrams that untangle very easily.\label{120C01:fig}}
\end{figure}
which are both very quickly untangled via $Z_{1,2,3}$ moves.

\paragraph{Straightening of a curve}
Before we can describe the announced variant $\Ctil$ of the move $C$,
we need to give an easy definition:

\begin{defn}
\emph{If $c$ is any tame curve in the plane with normal crossings only and distinct ends, we denote by
$\sigma(c)$ and call \emph{straightening} of $c$ any simple curve contained in $c$ and having
the same ends as $c$.}
\end{defn}

\begin{rem}\label{unique:straight:rem}
\emph{$\sigma(c)$ is not unique but finitely many possibilities exist, and
one can be constructed algorithmically by following $c$ from one of its ends and erasing
any curl as soon as it is born, see Fig.~\ref{straightnew:fig}.}
\begin{figure}
    \begin{center}
    \includegraphics[scale=.6]{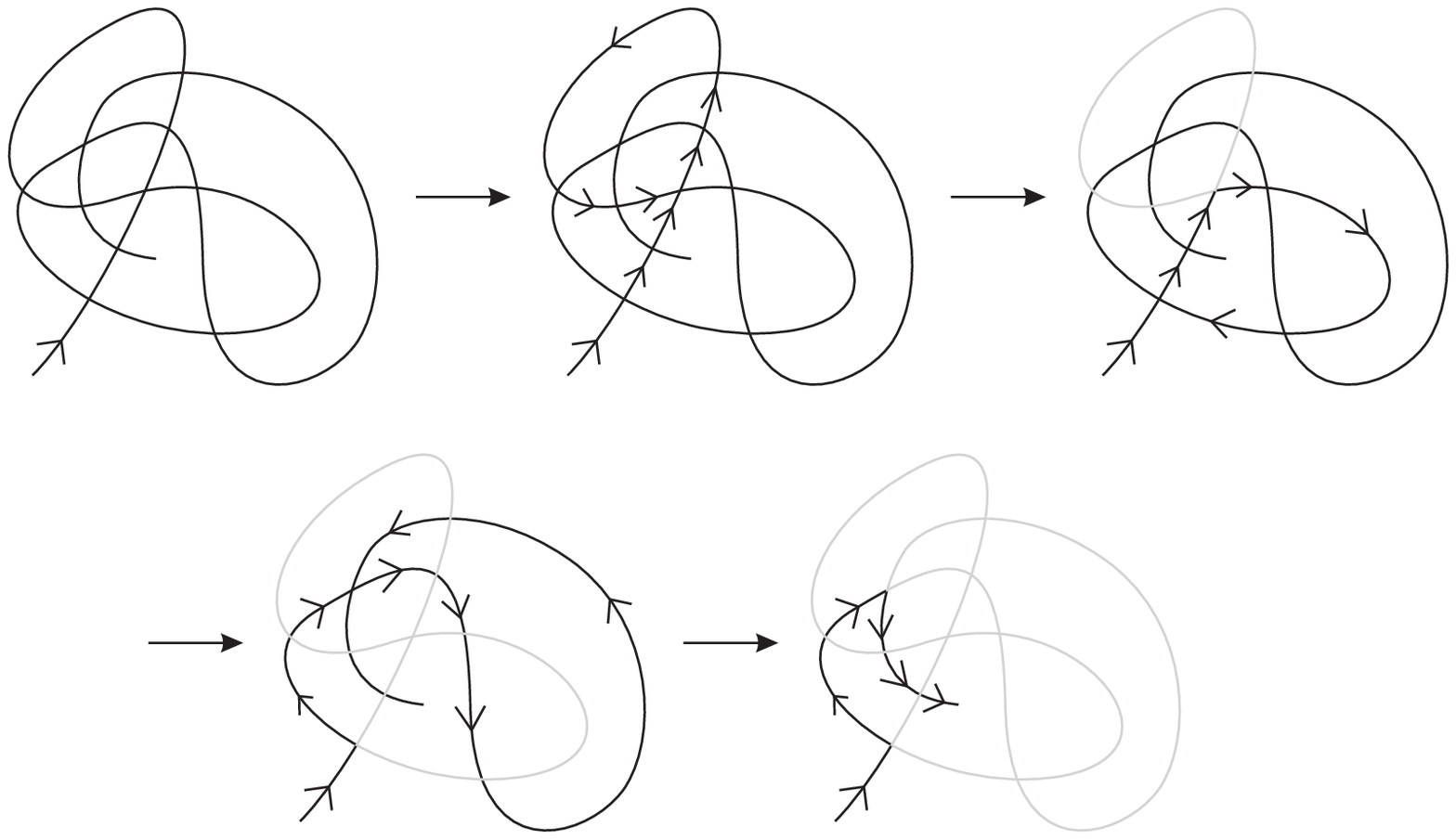}
    \end{center}
\mycap{Straightening a curve by removing curls as soon as they are born.\label{straightnew:fig}}
\end{figure}
\end{rem}

\paragraph{The move $\Ctil$}
Let $p:\mathop{\sqcup}\limits_{i=1}^p S_i^1\to S^2$ be the immersion associated to a link diagram $D$.
Let $\Omega$ and $\Omegatil$ be tame topological discs in $S^2$, with $\Omega$ contained in the interior
of $\Omegatil$, so the closure of $\Omegatil\setminus\Omega$ is an annulus $\Theta$.
Suppose $\partial\Omega$ and $\partial\Omegatil$ are transverse to $D$ and the following happens
(see Fig.~\ref{preCminus:fig}):
\begin{figure}
    \begin{center}
    \includegraphics[scale=.5]{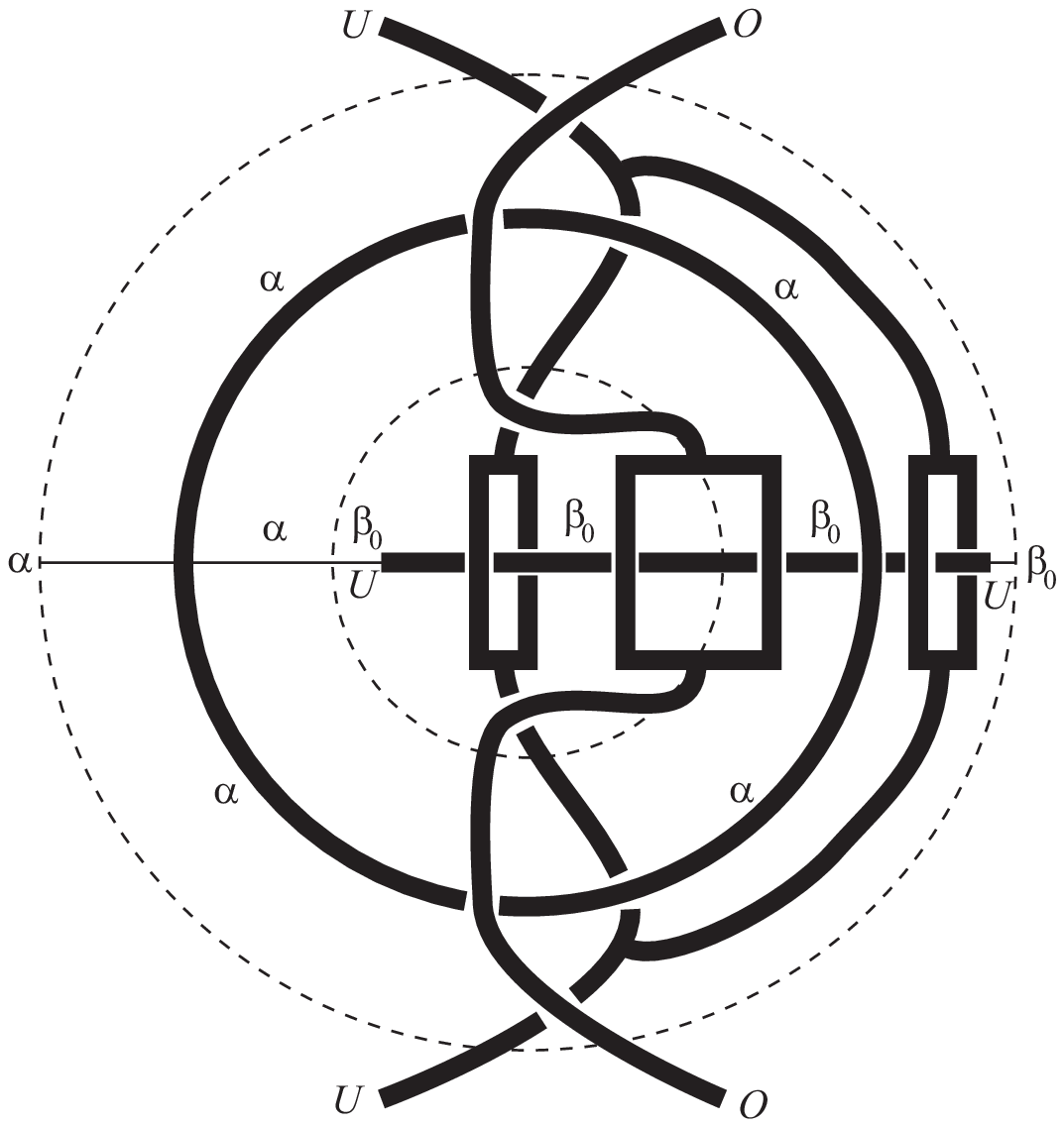}
    \end{center}
\mycap{A diagram $D$ to which the move $\Ctil$ applies.\label{preCminus:fig}}
\end{figure}\begin{itemize}
\item There exists an arc $\alpha\subset S_i^1$
such that $p(\alpha)\subset\Theta$ has one end on each component of $\partial\Theta$;
\item The preimage $p^{-1}\left(\Omegatil\right)$ has components
$\betaztil,\beta_1,\ldots,\beta_N$ with $\betaztil\supset\alpha$;
\item If $\beta_0=\betaztil\setminus\alpha$ one can choose labels $\lambda_0,\lambda_1,\ldots,\lambda_N$ in $\{O,U,\}$ so that:
\begin{itemize}
\item If $\lambda_j=O$ then $p(\beta_j)$ is over $p(\alpha)$ where they cross;
\item If $\lambda_j=U$ then $p(\beta_j)$ is under $p(\alpha)$ where they cross;
\item If $\lambda_j=O$ and $\lambda_k=U$ then $p(\beta_j)$ is over $p(\beta_k)$ where they cross.
\end{itemize}
\end{itemize}
Given these data, we call $C$ the move that replaces $D$ by the pair of diagrams $(D_0,D_1)$, where (see Fig.~\ref{postCminus01:fig}):
\begin{itemize}
\item $D_0$ is $p(\alpha)\cup\sigma(\beta_0)$ union one of the two halves into which the ends of $\beta_0$
split $\partial\Omegatil$, with all the crossings as in $D$;
\item $D_1$ is $\left(D\setminus p(\alpha)\right)\cup\sigma(p(\alpha))$, with all the crossings
as in $D$.
\end{itemize}
\begin{figure}
    \begin{center}
    \includegraphics[scale=.5]{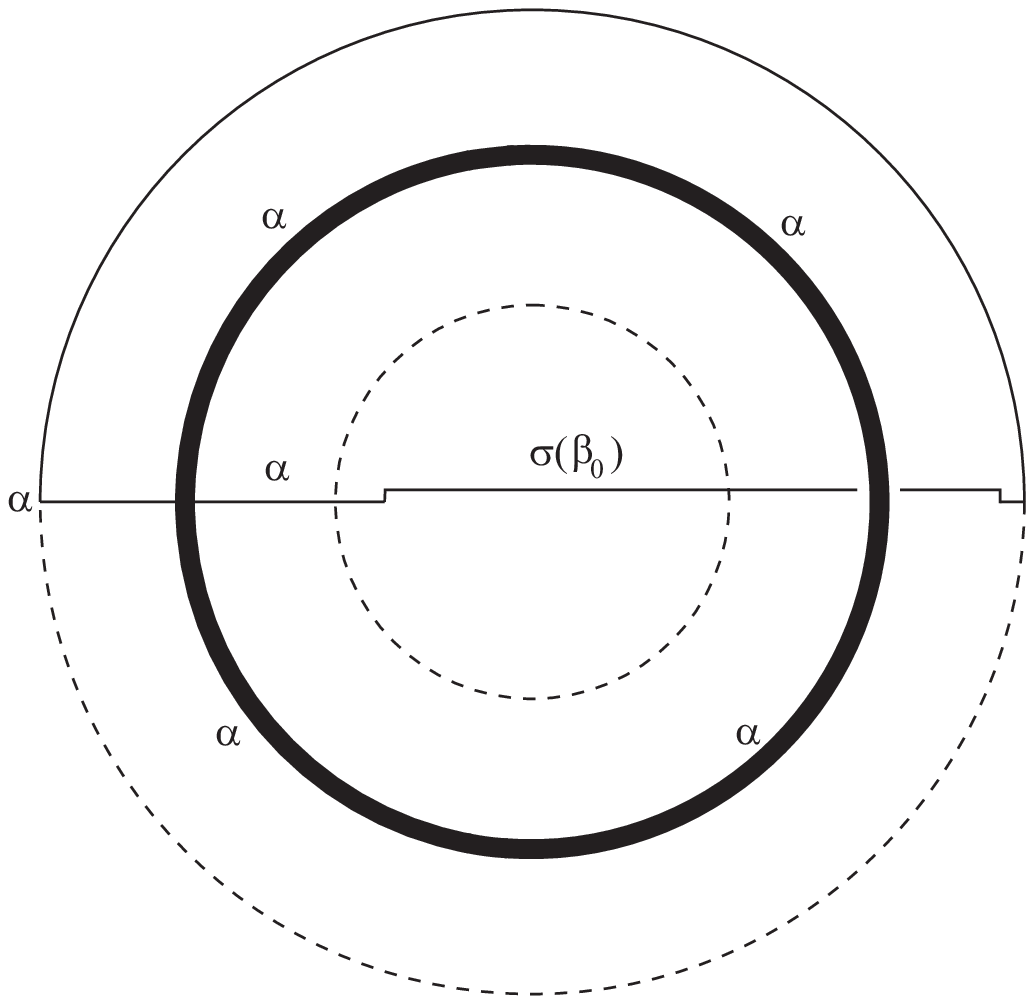}\qquad
    \includegraphics[scale=.5]{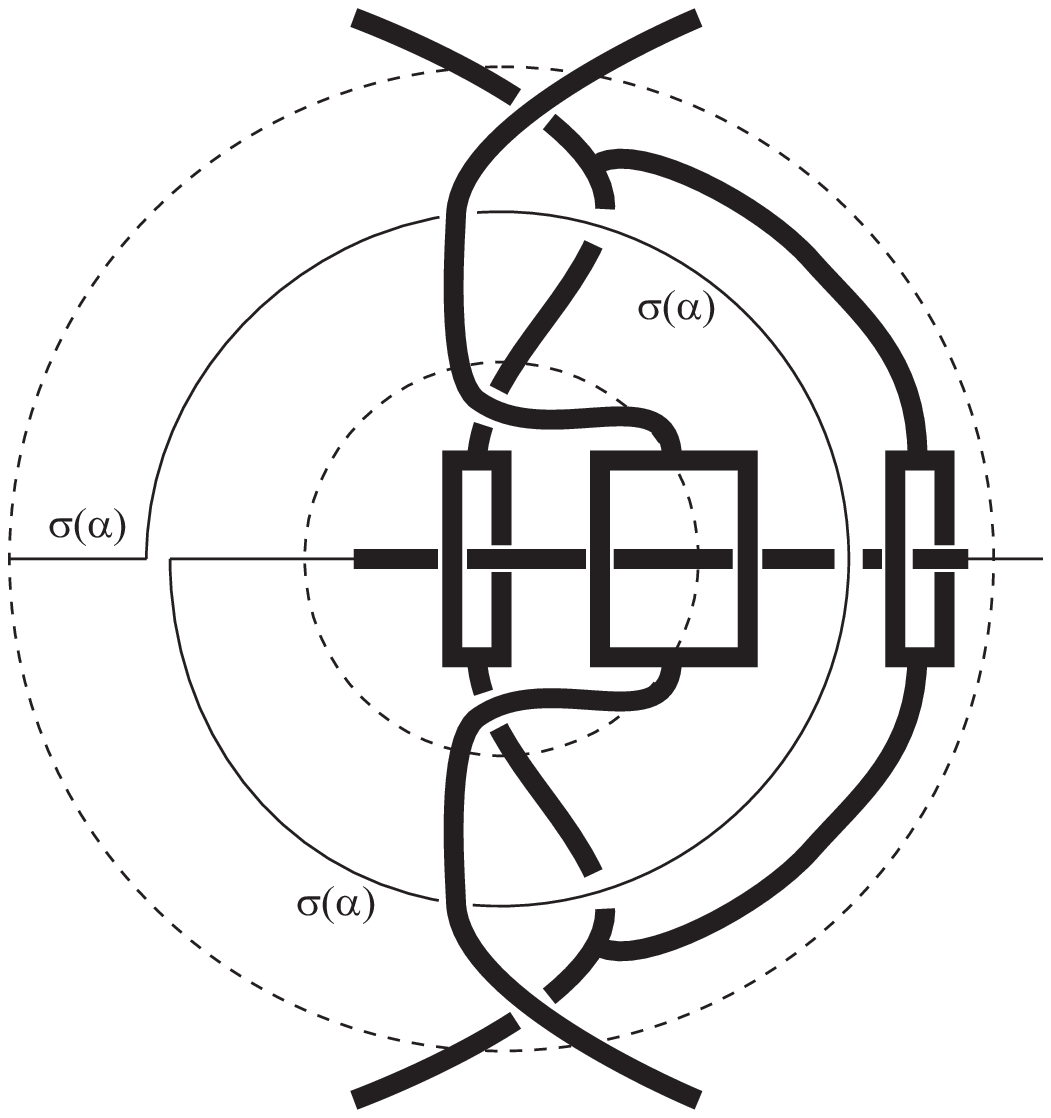}
    \end{center}
\mycap{The diagrams $D_0$ and $D_1$.\label{postCminus01:fig}}
\end{figure}

\begin{prop}
$[D]=[D_0]\#[D_1]$.
\end{prop}

\begin{proof}
Suppose the label $\lambda_0$ of $\beta_0$ is $U$.
The assumptions then imply that $[D]$ can be realized as a link $L$ in $\matR^3$ such that
$L\cap \left(\Omegatil\times\matR\right)$ is the disjoint union of 4 portions:
\begin{itemize}
\item One in $\Theta\times(-\varepsilon,\varepsilon)$, with projection $p(\alpha)$;
\item One in $\Omegatil\times(1-\varepsilon,1+\varepsilon)$, with projection $\bigcup\{p(\beta_i):\ \lambda_i=O\}$;
\item One in $\Omegatil\times(-1-\varepsilon,-1+\varepsilon)$, with projection $\bigcup\{p(\beta_i):\ \lambda_i=U\}$;
\item A vertical arc $P\times[-1,0]$ with $P\in\partial\Omega$.
\end{itemize}
So the sphere $\partial(\Omega\times[-\varepsilon,\varepsilon])$ meets $L$
transversely at two points, whence it allows to express $L$ as $L_0\# L_1$, and of course $L_j=[D_j]$.
\end{proof}

\begin{rem}
\begin{itemize}
\item \emph{In $D_0$ we used $\sigma(\beta_0)$ to join the ends of $p(\alpha)$ to have
a definite procedure, but any simple arc joining them in $\Theta$ would work, provided its crossings with $p(\alpha)$
have the same type as those of $\beta_0$;}
\item \emph{Similarly, in $D_1$ we could replace $\sigma(p(\alpha))$ with any simple arc in $\Theta$ having the same ends,
provided its crossings with the $\beta_i$'s have the same type as those of $p(\alpha)$.}
\end{itemize}
\end{rem}

An example of move $\Ctil$ is shown in Figg.~\ref{CminusExample:fig}
\begin{figure}
    \begin{center}
    \includegraphics[scale=.45]{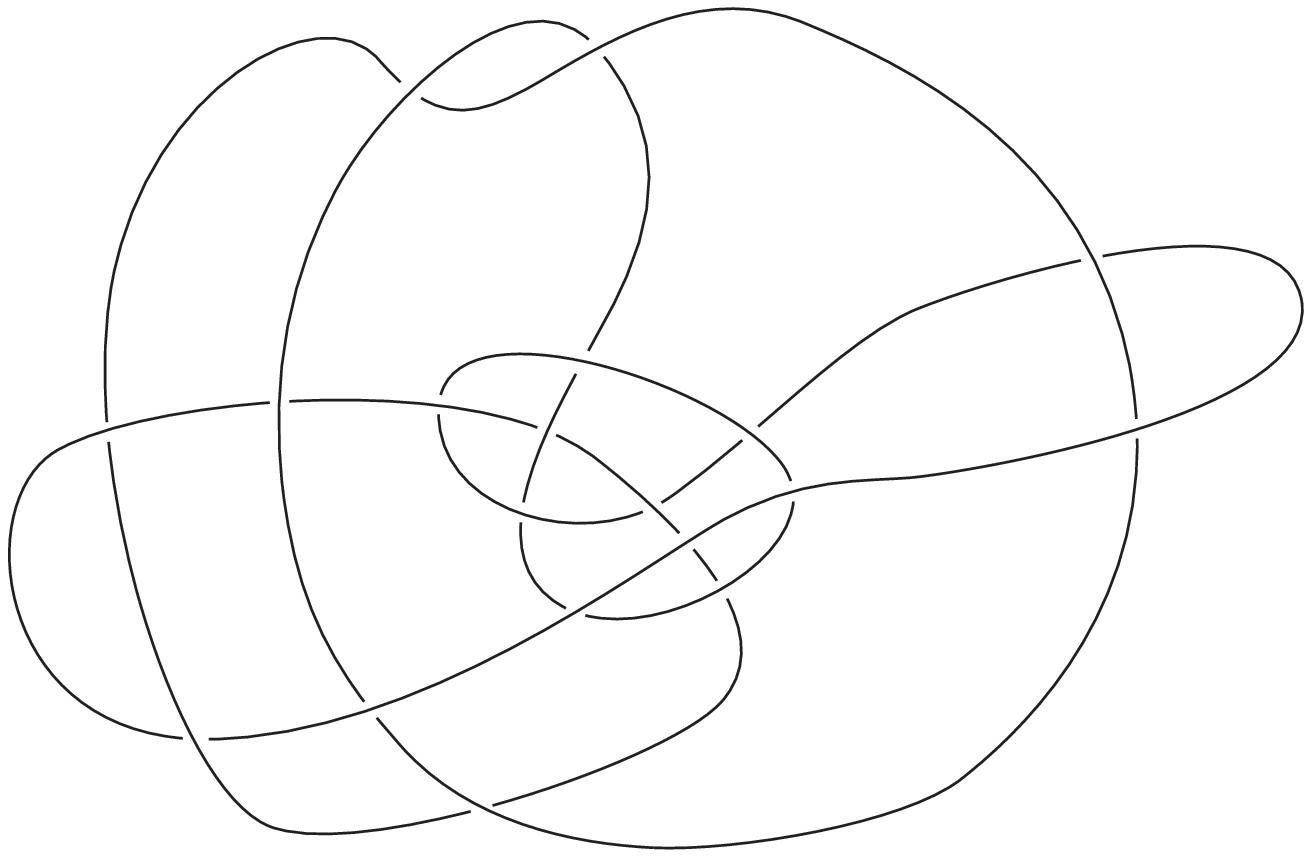}\qquad
    \includegraphics[scale=.45]{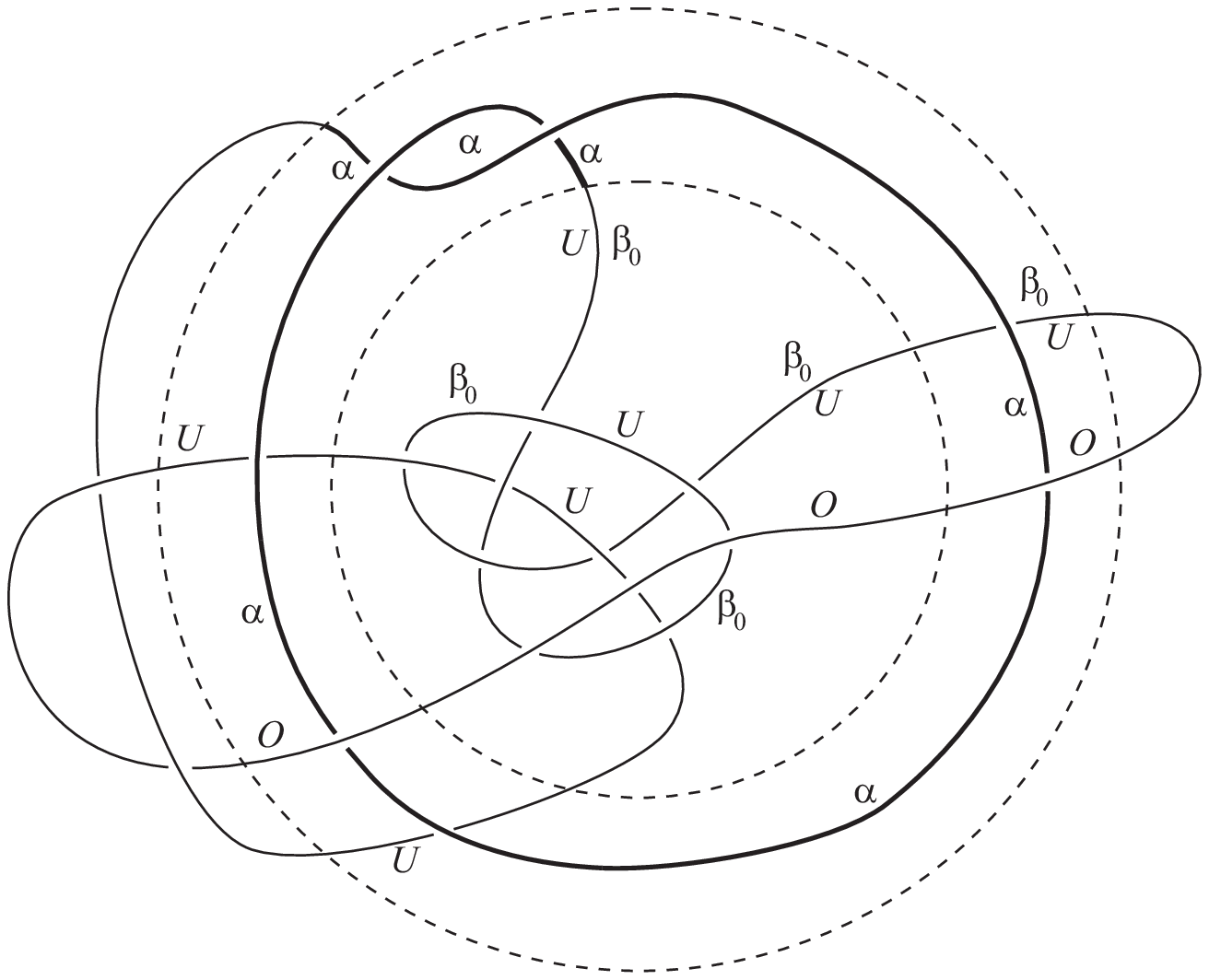}
    \end{center}
\mycap{A diagram and the identification of a move $\Ctil$ that applies to it.\label{CminusExample:fig}}
\end{figure}
and~\ref{CminusExample01:fig}; note that no move $C$ exists in this case.
\begin{figure}
    \begin{center}
    \includegraphics[scale=.45]{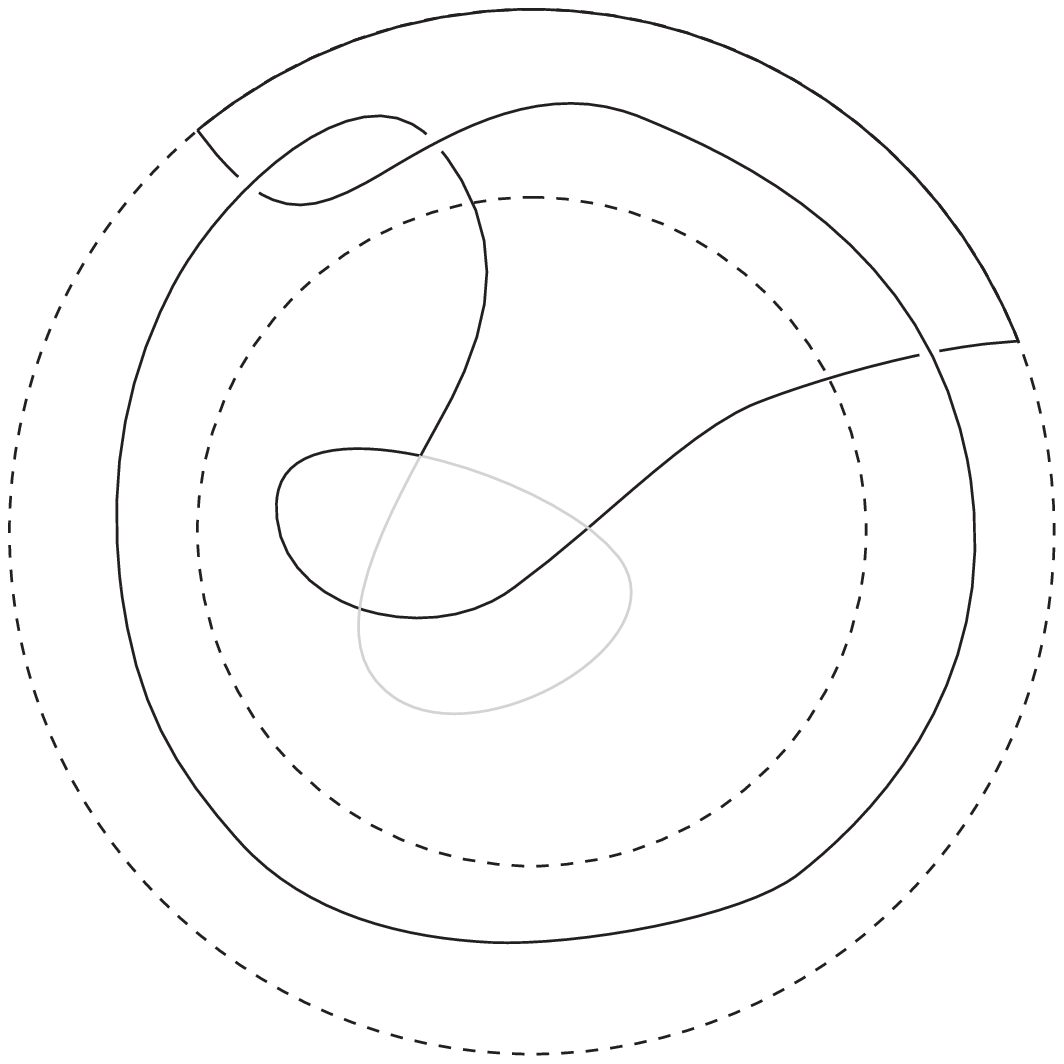}\qquad
    \includegraphics[scale=.45]{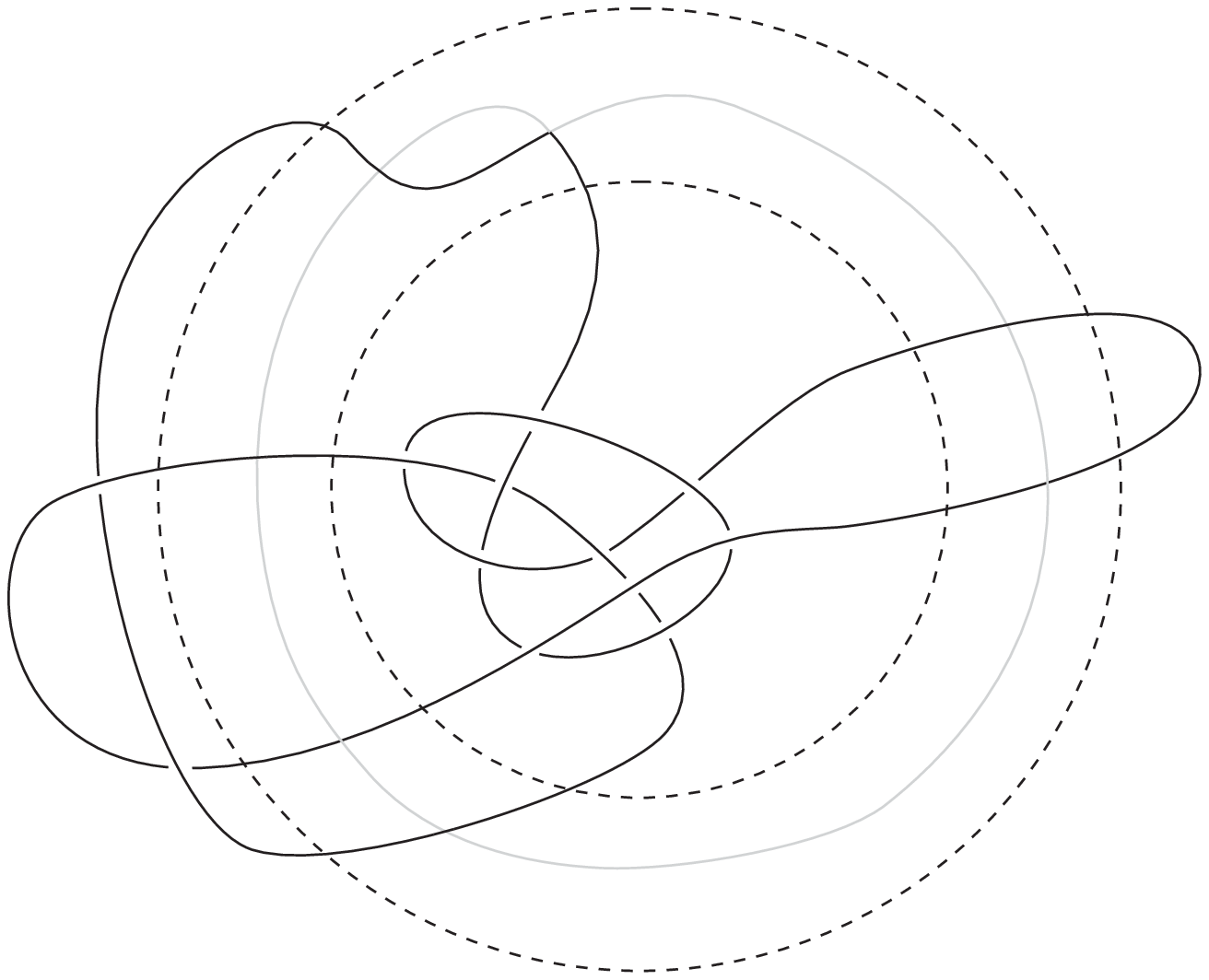}
    \end{center}
\mycap{Diagrams resulting from the move $\Ctil$.\label{CminusExample01:fig}}
\end{figure}
As for the move $C$, the construction implies that $c(D_0)+c(D_1)\leqslant c(D)$. Moreover,
$c(D_0)<c(D)$ and $c(D_1)<c(D)$ except in situations,
that one can easily avoid, where one of the $D_i$'s is actually $D$.

We conclude by remarking that we do not know whether $\Ctil$ is actually essential:
all the the knot diagrams we have used so far as tests for our algorithm were completely simplified
in a monotonic fashion using the moves $Z_{1,2,3}$ and $C$ only.
(For the diagram of Fig.~\ref{CminusExample:fig} a $C$ appears after some $Z_{1,2,3}$.)

\vspace{.5cm}

\noindent
adolfo@zanellati.it

\vspace{.5cm}

\noindent
Dipartimento di Matematica\\
Universit\`a di Pisa\\
Largo Bruno Pontecorvo, 5\\
56127 PISA -- Italy\\
petronio@dm.unipi.it

\end{document}